\newtheorem{theorem}{Theorem}[section]
\newtheorem{lemma}[theorem]{Lemma}
\newtheorem{corollary}[theorem]{Corollary}
\theoremstyle{definition}
\newtheorem{definition}[theorem]{Definition}
\newtheorem{example}[theorem]{Example}
\theoremstyle{remark}
\newtheorem{remark}[theorem]{Remark}
\numberwithin{equation}{section}
\begin{document}

\title{Continuous K-G-frames in Hilbert spaces}


\author{E. Alizadeh }
\address{Department of Mathematics, Shabestar Branch, Islamic Azad University, Shabestar, Iran 
}\email{esmaeil.alizadeh1020@gmail.com}  
\author{A. Rahimi}
\address{Department of Mathematics, University of Maragheh, Maragheh, Iran
} \email{rahimi@maragheh.ac.ir}
\author{ E. Osgooei }
\address{Faculty of Science, Urmia University of Technology, Urmia, Iran 
}\email{e.osgooei@uut.ac.ir }
\author{M. Rahmani}
\address{Young Researchers and Elite Club, Ilkhchi Branch, Islamic Azad University, Ilkhchi, Iran
}
 \email{m\_rahmani26@yahoo.com }





\subjclass[2010]{Primary 42C15, 42C40}
\keywords{c-K-g frame,  K-g frame, g-frame, K-frame}
\begin{abstract}
In this paper, we  intend to introduce the  concept of c-K-g-frames, which are the  generalization of  K-g-frames. In addition, we  prove some new results on c-K-g-frames in Hilbert spaces. Moreover, we define the related operators of c-K-g frames. Then, we give necessary and sufficient conditions on c-K-g-frames to characterize them. Finally, we verify perturbation of c-K-g-frames.
\end{abstract}
\maketitle
\section{Introduction}
\label{intro}
Frames (discrete frames) were introduced  by Duffin and Schaeffer in 1952 \cite{7} for studying some profound problems in nonharmonic Fourier series. Discrete and continuous frames arise in many applications in both pure and applied mathematics and in particularly frame theory has been extensively used in many fields such as filter bank theory, signal and image processing, coding and communications \cite{15} and other areas.
 \par
 Over the years, various extensions of the frame theory have been investigated.
Several of these are contained as special cases of the elegant theory
for g-frames that was introduced in \cite{1001,16}. For example, one can consider: bounded quasi-projectors, fusion frames, pseudo-frames, oblique frames, outer frames and etc.
\par
Frames and their relatives are most often considered in the discrete case, for instance in signal processing \cite{7}. However, continuous frames have also been studied and offer interesting mathematical problems. They have been introduced originally by Ali, Gazeau and Antoine \cite{2} and also, independently, by Kaiser \cite{Kaiser}. Since then, several papers dealt with various aspects of the concept, see for instance \cite{Fornasier,9} or \cite{multi.con.frame,12,13,14}. By combining the above mentioned  extensions of frames, the new and more general notion called \textit{continuous g-frame} has been introduced in \cite{1}.
\par
Traditionally, frames were studied for the whole space or for the closed subspace.
Gavruta in \cite{k-frame} gave another generalization of frames namely K-frames, which allows to reconstruct elements from the range of a linear and
bounded operator in a Hilbert space. In general, range is not a closed subspace. K-frames allow us in a stable way, to reconstruct elements from the range of a linear and bounded operator in a Hilbert space.

 K-g-frames have been introduced in \cite{3,10} and some properties and characterizations of K-g-frames has been obtained, for more information on K-g-frames, the reader can check \cite{10,17}. In this paper, we generalize some results in \cite{10} and \cite{17} to continuous version of frames.
\par
Throughout this paper, $H, (\Omega, \mu)$ and $\{H_{\omega}\}_{\omega\in\Omega}$ will be a separable Hilbert space, a measure space and a family of Hilbert spaces, respectively and $K$ is a bounded linear operator on $H$.
At first we review some definitions and results about g-frames.
\begin{definition}
Let $K\in B(H)$. A sequence $\{f_{n}\}_{n=1}^{\infty}$ is called a K-frame for $H$, if there exist constants $A, B>0$ such that
\begin{equation}\label{b}
A\|K^{*}f\|^{2}\leq \displaystyle\sum_{n=1}^{\infty}|\langle f, f_{n}\rangle|^{2}\leq B\|f\|^{2},\quad f\in H.
\end{equation}
We call $A, B$ the lower  and the upper frame bounds of K-frame $\{f_{n}\}_{n=1}^{\infty}$, respectively.
If only the right inequality  (\ref{b}) holds, $\{f_{n}\}_{n=1}^{\infty}$ is called a Bessel sequence. If $K=I$, then it is just the ordinary frame.
\end{definition}
\vspace{2mm}
\begin{definition}
Let $K\in B(H)$ and $\Lambda=\{\Lambda_{i}\in B(H, H_{i}) : i\in I\}$. We call $\Lambda$ a K-g-frame for $H$ with respect to $\{H_{i}\}_{i\in I}$, or simply, a K-g-frame for $H$, if there exist constants $A, B>0$ such that
\begin{equation}
A\|K^{*}f\|^{2}\leq \displaystyle\sum_{i\in I}\|\Lambda_{i}f\|^{2}\leq B\|f\|^{2},\quad f\in H.
\end{equation}
The constants $A, B$ are called the lower and upper bounds of K-g-frame, respectively.
\end{definition}
\begin{remark}
 Every K-g-frame is a g-Bessel sequence for $H$. If $K=I$, K-g-frame is a g-frame.\\
\end{remark}

Denote the representation space $l^{2}\Big(\{H_{i}\}_{i\in I}\Big)$  by
$$l^{2}\Big(\{H_{i}\}_{i\in I}\Big)=\left\lbrace \{a_{i}\}_ {i\in I} : a_{i}\in H_{i},i\in I,\sum_{i\in I}\|a_{i} \|^{2}<\infty\right\rbrace .$$
We define the bounded linear operator $T: l^{2}\Big(\{H_{i}\}_{i\in I}\Big)\longrightarrow H$ as follows
\[T\Big(\{g_{i}\}_{i\in I}\Big)=\displaystyle\sum_{i\in I}\Lambda_{i}^{*}g_{i},\quad \{g_{i}\}_{i\in I}\in l^{2}\Big(\{H_{i}\}_{i\in I}\Big).\]
\begin{theorem}\em{(\cite{11}).}
The family $\Lambda=\{\Lambda_{i}\in B(H, H_{i}) :  i\in I\}$ is a g-Bessel sequence for $H$ with bound $B$ if and only if the operator
\begin{eqnarray*}
&~&T: l^{2}\Big(\{H_{i}\}_{i\in I}\Big)\longrightarrow H\\
&~&T\Big(\{g_{i}\}_{i\in I}\Big)=\displaystyle\sum_{i\in I}\Lambda_{i}^{*}g_{i}
\end{eqnarray*}
is a well-defined and bounded operator with $\|T\|\leq\sqrt{B}$.
\end{theorem}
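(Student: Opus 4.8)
The plan is to establish the two implications separately, routing the nontrivial one through the adjoint of $T$.

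For the forward implication, suppose $\Lambda$ is a g-Bessel sequence with bound $B$. First I would bound the partial sums: for a finite set $F\subseteq I$, a family $\{g_{i}\}_{i\in I}\in l^{2}(\{H_{i}\}_{i\in I})$, and any $f\in H$,
\[
\Bigl|\Bigl\langle \sum_{i\in F}\Lambda_{i}^{*}g_{i},\,f\Bigr\rangle\Bigr|=\Bigl|\sum_{i\in F}\langle g_{i},\Lambda_{i}f\rangle\Bigr|\le\Bigl(\sum_{i\in F}\|g_{i}\|^{2}\Bigr)^{1/2}\Bigl(\sum_{i\in F}\|\Lambda_{i}f\|^{2}\Bigr)^{1/2}\le\sqrt{B}\,\Bigl(\sum_{i\in F}\|g_{i}\|^{2}\Bigr)^{1/2}\|f\|,
\]
by Cauchy--Schwarz and the Bessel bound. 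Taking the supremum over $\|f\|\le1$ gives $\bigl\|\sum_{i\in F}\Lambda_{i}^{*}g_{i}\bigr\|\le\sqrt{B}\bigl(\sum_{i\in F}\|g_{i}\|^{2}\bigr)^{1/2}$ uniformly in $F$. Applying this to tails $G\setminus F$ (for finite $F\subseteq G$) shows that the net of partial sums is Cauchy in the complete space $H$, hence convergent, so $T$ is well defined and linear; letting $F\uparrow I$ in the estimate yields $\|T\{g_{i}\}\|\le\sqrt{B}\,\|\{g_{i}\}\|$, that is $\|T\|\le\sqrt{B}$.

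For the converse, suppose $T$ is well defined and bounded with $\|T\|\le\sqrt{B}$. I would compute the adjoint $T^{*}\colon H\to l^{2}(\{H_{i}\}_{i\in I})$ by testing on the family having $g_{j}\in H_{j}$ in the $j$th coordinate and $0$ elsewhere: $\langle (T^{*}f)_{j},g_{j}\rangle=\langle f,\Lambda_{j}^{*}g_{j}\rangle=\langle\Lambda_{j}f,g_{j}\rangle$, so $T^{*}f=\{\Lambda_{i}f\}_{i\in I}$. In particular $\{\Lambda_{i}f\}_{i\in I}\in l^{2}(\{H_{i}\}_{i\in I})$ and
\[
\sum_{i\in I}\|\Lambda_{i}f\|^{2}=\|T^{*}f\|^{2}\le\|T\|^{2}\|f\|^{2}\le B\|f\|^{2},
\]
which is precisely the g-Bessel condition with bound $B$.

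The only genuinely delicate point is the convergence step in the forward direction: one must check that $\sum_{i\in I}\Lambda_{i}^{*}g_{i}$ actually converges (unconditionally), not merely that its finite partial sums are uniformly bounded. This is exactly where the uniform estimate on finite sums combines with completeness of $H$. The remainder is routine manipulation with Cauchy--Schwarz and the definition of the Hilbert-space adjoint.
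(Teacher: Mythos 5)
Your proof is correct: the paper itself quotes this result from \cite{11} without proof, and your argument (Cauchy--Schwarz on finite partial sums plus completeness for well-definedness and $\|T\|\leq\sqrt{B}$, then the identification $T^{*}f=\{\Lambda_{i}f\}_{i\in I}$ for the converse) is exactly the standard proof given in that reference. No gaps; the one delicate point you flag, unconditional convergence of $\sum_{i\in I}\Lambda_{i}^{*}g_{i}$, is handled correctly by the uniform tail estimate.
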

\vspace{2mm}
The adjoint operator $T^{*}: H\longrightarrow l^{2}\Big(\{H_{i}\}_{i\in I}\Big)$ is given by \[T^{*}f=\{\Lambda_{i}f\}_{i\in I},\quad f\in H.\]
By composing $T$ with its adjoint $T^{*}$, we obtain the bounded linear operator $S: H\longrightarrow H$, as follows
\[Sf= TT^{*}f=\displaystyle\sum_{i\in I}\Lambda_{i}^{*}\Lambda_{i}f,\quad f\in H.\]
We call $T$, $T^{*}$ and $S$ the synthesis operator, analysis operator and frame operator of K-g-frame, respectively. These operators play important roles in studying of K-g-frame theory.

\begin{lemma}\label{k}\em{(\cite{7}).}
Let $H'$ be a complex Hilbert space. Suppose that $U: H\longrightarrow H'$ is a bounded linear operator with closed range $R(U)$. Then there exists a unique bounded linear operator $U^{\dagger}:H'\longrightarrow H$ satisfying
\[N(U^{\dagger})=R(U)^{\bot},~~R(U^{\dagger})=N(U)^{\bot},~~UU^{\dagger}f=f,~~f\in R(U).\]
The operator $U^{\dagger}$ is called the pseudo-inverse operator of $U$.
\end{lemma}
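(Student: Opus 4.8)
The plan is to construct $U^{\dagger}$ explicitly from the orthogonal decompositions of $H$ and $H'$ induced by $U$, and then to verify the three defining identities together with uniqueness. Since $R(U)$ is closed, we have the orthogonal decomposition $H' = R(U)\oplus R(U)^{\bot}$; write $P$ for the orthogonal projection of $H'$ onto $R(U)$. On the domain side, $H = N(U)\oplus N(U)^{\bot}$, and the restriction $U_{0} := U|_{N(U)^{\bot}}\colon N(U)^{\bot}\longrightarrow R(U)$ is a bounded linear bijection: it is injective because its kernel is $N(U)\cap N(U)^{\bot} = \{0\}$, and it is onto $R(U) = R(U_{0})$ by the definition of the range.

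The crucial step is to invoke the bounded inverse theorem: $N(U)^{\bot}$ and $R(U)$ are closed subspaces of Hilbert spaces, hence Banach spaces, and $U_{0}$ is a continuous bijection between them, so $U_{0}^{-1}\colon R(U)\longrightarrow N(U)^{\bot}$ is bounded. This is the only place where the closed-range hypothesis is genuinely used. I would then define
\[
U^{\dagger} := U_{0}^{-1}P\colon H'\longrightarrow H,
\]
which is bounded as a composition of bounded operators, with $\|U^{\dagger}\|\le \|U_{0}^{-1}\|$ since $\|P\|\le 1$.

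It remains to check the listed properties, all of which are now routine. For $f\in H'$, $U^{\dagger}f = 0$ iff $U_{0}^{-1}Pf = 0$ iff $Pf = 0$ (as $U_{0}^{-1}$ is injective) iff $f\in R(U)^{\bot}$, giving $N(U^{\dagger}) = R(U)^{\bot}$. Since $P$ maps onto $R(U)$ and $U_{0}^{-1}$ maps onto $N(U)^{\bot}$, we get $R(U^{\dagger}) = N(U)^{\bot}$. If $f\in R(U)$ then $Pf = f$, so $UU^{\dagger}f = UU_{0}^{-1}f = U_{0}U_{0}^{-1}f = f$. For uniqueness, suppose $V\colon H'\to H$ is bounded and linear with $N(V) = R(U)^{\bot}$, $R(V) = N(U)^{\bot}$, and $UVf = f$ on $R(U)$. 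Writing $f = Pf + (I-P)f$ with $(I-P)f\in R(U)^{\bot} = N(V)$ gives $Vf = V(Pf)$; moreover $V(Pf)\in R(V) = N(U)^{\bot}$ and $U(V(Pf)) = Pf$ because $Pf\in R(U)$, so $V(Pf)$ is the unique preimage of $Pf$ under the bijection $U_{0}$, namely $U_{0}^{-1}Pf = U^{\dagger}f$. Hence $V = U^{\dagger}$. I expect no real obstacle beyond correctly applying the bounded inverse theorem in the closed-range step; everything else is bookkeeping with orthogonal projections.
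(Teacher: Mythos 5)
Your proof is correct, but note that the paper itself offers no proof of this lemma at all: it is quoted from the literature (it is the standard existence and uniqueness statement for the pseudo-inverse, usually attributed to Christensen's book rather than the cited Duffin--Schaeffer paper), so there is nothing in the paper to compare against. Your argument is the standard construction: decompose $H=N(U)\oplus N(U)^{\bot}$ and $H'=R(U)\oplus R(U)^{\bot}$, observe that $U_{0}=U|_{N(U)^{\bot}}\colon N(U)^{\bot}\to R(U)$ is a bounded bijection between Banach spaces (this is where closedness of $R(U)$ is used, via the bounded inverse theorem), set $U^{\dagger}=U_{0}^{-1}P$ with $P$ the orthogonal projection onto $R(U)$, and verify the three identities and uniqueness. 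All steps check out; the only point you state a bit glibly is the surjectivity of $U_{0}$, which deserves the one-line remark that any $y=Ux\in R(U)$ equals $Ux_{2}$ where $x_{2}$ is the component of $x$ in $N(U)^{\bot}$, so that $U(N(U)^{\bot})=U(H)=R(U)$. With that sentence added, the proof is complete and is exactly the argument one would expect the cited sources to give.
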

\vspace{2mm}
\begin{lemma}\em{(\cite{10}).}
Let $\{\Lambda_{i}\}_{i\in I}$ be a g-Bessel sequence for $H$  with respect  to $\{H_{i}\}_{i\in I}$. Then $\{\Lambda_{i}\}_{i\in I}$ is a K-g-frame for $H$ with respect to $\{H_{i}\}_{i\in I}$ if and only if there exists a constant $A>0$ such that $S\geq AKK^{*}$, where $S$ is the frame operator for $\{\Lambda_{i}\}_{i\in I}$.
\end{lemma}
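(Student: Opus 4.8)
The plan is to observe that the two conditions are simply two encodings of the same quadratic‑form inequality, so that the proof reduces to one short computation together with the standing g‑Bessel hypothesis.

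First I would record that, because $\{\Lambda_{i}\}_{i\in I}$ is a g-Bessel sequence, the analysis operator $T^{*}\colon H\to l^{2}(\{H_{i}\}_{i\in I})$, $T^{*}f=\{\Lambda_{i}f\}_{i\in I}$, is bounded, and hence so is the frame operator $S=TT^{*}$; moreover $S$ is positive and self-adjoint. For every $f\in H$ one has
\[
\langle Sf,f\rangle=\langle TT^{*}f,f\rangle=\|T^{*}f\|^{2}=\sum_{i\in I}\|\Lambda_{i}f\|^{2},
\]
and similarly $\|K^{*}f\|^{2}=\langle K^{*}f,K^{*}f\rangle=\langle KK^{*}f,f\rangle$.

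Next I would translate the defining inequalities. Since the Bessel bound already furnishes the right-hand inequality in the definition of a K-g-frame, the family $\{\Lambda_{i}\}_{i\in I}$ is a K-g-frame for $H$ if and only if there is a constant $A>0$ with
\[
A\|K^{*}f\|^{2}\le\sum_{i\in I}\|\Lambda_{i}f\|^{2}\qquad(f\in H).
\]
By the two identities above, this is exactly $A\langle KK^{*}f,f\rangle\le\langle Sf,f\rangle$ for all $f\in H$, i.e. $\langle (S-AKK^{*})f,f\rangle\ge 0$ for all $f\in H$. Because $H$ is a complex Hilbert space, nonnegativity of this quadratic form forces $S-AKK^{*}$ to be a positive (in particular self-adjoint) operator, which is precisely the assertion $S\ge AKK^{*}$; reading the chain of equivalences in the reverse direction gives the converse.

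The argument is elementary, so I do not expect a genuine obstacle: the only place where the hypothesis is actually used, rather than the manipulation being purely formal, is in guaranteeing that the frame operator $S$ exists as a bounded positive operator at all, which is exactly why the statement is phrased for g-Bessel sequences. Once the identifications $\langle Sf,f\rangle=\sum_{i\in I}\|\Lambda_{i}f\|^{2}$ and $\langle KK^{*}f,f\rangle=\|K^{*}f\|^{2}$ are made explicit, the equivalence is immediate.
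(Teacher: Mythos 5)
Your proof is correct and is essentially the same argument the paper uses for its continuous analogue (Lemma \ref{x}): the discrete lemma itself is only cited from \cite{10}, but the paper's proof of the c-K-g-frame version is exactly this identification of $\sum_{i\in I}\|\Lambda_i f\|^2=\langle Sf,f\rangle$ and $\|K^*f\|^2=\langle KK^*f,f\rangle$, turning the lower frame bound into the operator inequality $S\geq AKK^*$, with the Bessel hypothesis supplying the upper bound. No substantive difference.
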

\begin{definition}\em{(\cite{1}).}
Let
 $\Pi_{\omega\in\Omega} H_{\omega}=\{f: \Omega\longrightarrow\cup_{\omega\in \Omega} H_{\omega}:\;f(\omega)\in H_{\omega}\}.$
  We say that  $F\in\displaystyle\Pi_{\omega\in \Omega} H_{\omega} $ is strongly measurable if $F$ as a mapping of $\Omega$ to $\oplus_{\omega\in \Omega}H_{\omega}$ is measurable.
\end{definition}

Continuous  g-frames  are defined as below:
 \begin{definition}\label{f} \em{(\cite{1}).}
We say that $\Lambda=\{\Lambda_{\omega}\in B(H, H_{\omega}):~\omega\in\Omega\}$ is a continuous generalized frame or simply a continuous g-frame for $H$  with respect to $\{H_{\omega}\}_{\omega\in \Omega}$, if
\end{definition}
\begin{enumerate}
\item[(i)]
for each $f\in H, \{\Lambda_{\omega}f\}_{\omega\in\Omega}$ is strongly measurable,
\item[(ii)]
there exist two constants $A,B$ such that
\end{enumerate}
\begin{equation}\label{d}
A\|f\|^{2}\leq \int_{\Omega} \|\Lambda_{\omega}f\|^{2} d\mu (\omega)\leq B\|f\|^{2}~,~ f\in H.
\end{equation}
$A$ and $B$ are called the lower and upper continuous g-frame bounds, respectively.

\vspace{2mm}
\begin{theorem}\em{(\cite{1}).}
Let $\{\Lambda_{\omega}\}_{\omega\in\Omega}$ be a continuous g-frame for $H$ with respect to $\{H_{\omega}\}_{\omega\in\Omega}$ with frame bounds $A$, $B$. Then, there exists a unique positive and invertible operator $S: H\longrightarrow H$ such that for each $f, g\in H$,
\[\langle Sf, g \rangle=\int_{\Omega}\langle f, \Lambda^{*}_{\omega}\Lambda_{\omega}g \rangle d\mu(\omega)\]
and $AI\leq S\leq BI$.
\end{theorem}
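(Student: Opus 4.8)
The plan is to realize $S$ as the bounded operator associated with a sesquilinear form built from the $\Lambda_{\omega}$, and then to read off all the asserted properties directly from the two inequalities in \eqref{d}. First I would define $\phi\colon H\times H\to\mathbb{C}$ by
\[
\phi(f,g)=\int_{\Omega}\langle\Lambda_{\omega}f,\Lambda_{\omega}g\rangle\,d\mu(\omega).
\]
To justify this, note that $\omega\mapsto\langle\Lambda_{\omega}f,\Lambda_{\omega}g\rangle$ is measurable: by the polarization identity it is a finite linear combination of the functions $\omega\mapsto\|\Lambda_{\omega}(f+i^{k}g)\|^{2}$, $k=0,1,2,3$, each measurable by part (i) of Definition \ref{f}. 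Finiteness of the integral, and boundedness of $\phi$, follow from the Cauchy--Schwarz inequality used pointwise in $H_{\omega}$ and then in $L^{2}(\Omega,\mu)$, together with the upper bound in \eqref{d}:
\[
|\phi(f,g)|\le\int_{\Omega}\|\Lambda_{\omega}f\|\,\|\Lambda_{\omega}g\|\,d\mu(\omega)\le\Big(\int_{\Omega}\|\Lambda_{\omega}f\|^{2}d\mu(\omega)\Big)^{1/2}\Big(\int_{\Omega}\|\Lambda_{\omega}g\|^{2}d\mu(\omega)\Big)^{1/2}\le B\,\|f\|\,\|g\|.
\]
Thus $\phi$ is a bounded sesquilinear form on $H$, linear in the first variable and conjugate-linear in the second.

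Next I would pass from $\phi$ to $S$. For each fixed $f$, the map $g\mapsto\overline{\phi(f,g)}$ is a bounded linear functional on $H$, so by the Riesz representation theorem there is a unique vector, call it $Sf$, with $\phi(f,g)=\langle Sf,g\rangle$ for all $g\in H$; the standard argument then shows $f\mapsto Sf$ is linear and bounded with $\|S\|\le B$. Since $\langle Sf,g\rangle=\int_{\Omega}\langle\Lambda_{\omega}f,\Lambda_{\omega}g\rangle\,d\mu(\omega)=\int_{\Omega}\langle f,\Lambda_{\omega}^{*}\Lambda_{\omega}g\rangle\,d\mu(\omega)$, this $S$ satisfies the claimed identity, and uniqueness is immediate: if $S'$ also satisfies it, then $\langle(S-S')f,g\rangle=0$ for all $f,g\in H$, whence $S=S'$.

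It remains to establish self-adjointness, positivity, the bounds, and invertibility. From $\phi(g,f)=\overline{\phi(f,g)}$ we get $\langle Sf,g\rangle=\overline{\langle Sg,f\rangle}=\langle f,Sg\rangle$, so $S=S^{*}$. Taking $g=f$ gives $\langle Sf,f\rangle=\int_{\Omega}\|\Lambda_{\omega}f\|^{2}\,d\mu(\omega)$, so the two inequalities in \eqref{d} read exactly $A\|f\|^{2}\le\langle Sf,f\rangle\le B\|f\|^{2}$, i.e. $AI\le S\le BI$; in particular $S\ge0$. For invertibility I would use that $S$ is self-adjoint with $\langle Sf,f\rangle\ge A\|f\|^{2}$: this makes $S$ bounded below, hence injective with closed range, and self-adjointness gives $R(S)^{\bot}=N(S^{*})=N(S)=\{0\}$, so $R(S)=H$ and $S$ is invertible (equivalently, $0\le I-B^{-1}S\le(1-A/B)I$ forces $\|I-B^{-1}S\|<1$ when $A<B$, and $S=AI$ when $A=B$).

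The argument is essentially routine; there is no single hard step. The two places needing a little care are the measurability and finiteness of the integral defining $\phi$ — handled by polarization together with the upper frame bound and Cauchy--Schwarz — and the passage from the lower frame bound to genuine \emph{invertibility} (rather than mere injectivity) of $S$, where the self-adjointness established above is what does the work.
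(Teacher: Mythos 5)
The paper states this theorem as a quotation from \cite{1} and supplies no proof of its own, so there is no internal argument to compare against; judged on its own terms, your proof is correct and complete. It is also essentially the standard route: your bounded sesquilinear form $\phi$ produces exactly the operator $S=TT^{*}$, where $T$ is the synthesis operator of Theorem \ref{g}, and the passage from $AI\le S\le BI$ together with self-adjointness to invertibility is handled correctly (bounded below plus dense range via $R(S)^{\bot}=N(S)=\{0\}$). The two points you flag as needing care are indeed the right ones, and both are settled properly: measurability of $\omega\mapsto\langle\Lambda_{\omega}f,\Lambda_{\omega}g\rangle$ by polarization from condition (i) of Definition \ref{f}, and genuine surjectivity of $S$ rather than mere injectivity from the lower bound in \eqref{d}.
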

\vspace{2mm}
\begin{definition}
Let
\begin{eqnarray*}
  && \Big(\oplus_{\omega\in \Omega}H_{\omega}, \mu\Big)_{L^{2}}\\
  &=&\left\lbrace  F\in \prod_{\omega\in \Omega} H_{\omega}: \textmd{F is strongly measurable}, \int_{\Omega} \|F(\omega)\|^{2}d\mu(\omega)<\infty\right\rbrace,
\end{eqnarray*}
with inner product given by
\[ \langle F, G \rangle=\int_{\Omega} \langle F(\omega), G(\omega) \rangle d\mu(\omega).\]
It can be proved that $\Big(\oplus_{\omega\in\Omega}H_{\omega}, \mu\Big)_{L^{2}}$ is a Hilbert space (\cite{1}). We will denote the norm of $F\in \Big(\oplus_{\omega\in\Omega}H_{\omega}, \mu\Big)_{L^{2}}$ by $\|F\|_{2}$.
\end{definition}
\vspace{2mm}
\begin{theorem}\label{g}\em{(\cite{1}).}
Let $\{\Lambda_{\omega}\}_{\omega\in\Omega}$ be a continuous g-Bessel family for $H$ with respect to $\{H_{\omega}\}_{\omega\in \Omega}$  with bound $B$. Then the mapping  $T$ of \linebreak $\Big(\oplus_{\omega\in\omega}H_{\omega}, \mu\Big)_{L^{2}}$ to $H$ defined by
\[\langle TF, g \rangle=\int_{\Omega}\langle \Lambda_{\omega}^{*}F(\omega), g \rangle d\mu(\omega), \quad F\in \Big(\oplus_{\omega\in\Omega}H_{\omega}, \mu\Big)_{L^{2}},~g\in H,\]
is linear and bounded with $\|T\|\leq\sqrt{B}$. Furthermore for each $g\in H$ and $\omega\in \Omega$,
\[T^{*}(g)(\omega)=\Lambda_{\omega}g.\]
\end{theorem}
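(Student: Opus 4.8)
The plan is to construct $T$ via the Riesz representation theorem. Fix $F\in\big(\oplus_{\omega\in\Omega}H_{\omega},\mu\big)_{L^{2}}$. For $g\in H$ the scalar function $\omega\mapsto\langle\Lambda_{\omega}^{*}F(\omega),g\rangle=\langle F(\omega),\Lambda_{\omega}g\rangle$ is measurable, since $F$ is strongly measurable and, by condition (i) in the definition of a continuous g-Bessel family, $\{\Lambda_{\omega}g\}_{\omega\in\Omega}$ is strongly measurable, so their pointwise inner product is measurable. Applying the Cauchy--Schwarz inequality in $H_{\omega}$ for each $\omega$ and then the Cauchy--Schwarz inequality for integrals, we get
\[\int_{\Omega}\big|\langle\Lambda_{\omega}^{*}F(\omega),g\rangle\big|\,d\mu(\omega)\leq\int_{\Omega}\|F(\omega)\|\,\|\Lambda_{\omega}g\|\,d\mu(\omega)\leq\Big(\int_{\Omega}\|F(\omega)\|^{2}d\mu(\omega)\Big)^{1/2}\Big(\int_{\Omega}\|\Lambda_{\omega}g\|^{2}d\mu(\omega)\Big)^{1/2}\leq\sqrt{B}\,\|F\|_{2}\,\|g\|,\]
where the last step uses the upper Bessel bound. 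Thus $\int_{\Omega}\langle\Lambda_{\omega}^{*}F(\omega),g\rangle\,d\mu(\omega)$ is absolutely convergent and, as a function of $g$, defines a bounded functional on $H$ of norm at most $\sqrt{B}\,\|F\|_{2}$.

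By the Riesz representation theorem there is a unique vector, which we denote $TF\in H$, with
\[\langle TF,g\rangle=\int_{\Omega}\langle\Lambda_{\omega}^{*}F(\omega),g\rangle\,d\mu(\omega)\qquad(g\in H),\]
and $\|TF\|\leq\sqrt{B}\,\|F\|_{2}$. Linearity of $F\mapsto TF$ is immediate from linearity of the right-hand side in $F$ together with the uniqueness in the Riesz theorem, and the bound just obtained gives $\|T\|\leq\sqrt{B}$, so $T$ is the claimed linear bounded map.

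For the adjoint, observe first that for $g\in H$ the family $\{\Lambda_{\omega}g\}_{\omega\in\Omega}$ is strongly measurable and satisfies $\int_{\Omega}\|\Lambda_{\omega}g\|^{2}d\mu(\omega)\leq B\|g\|^{2}<\infty$, hence it belongs to $\big(\oplus_{\omega\in\Omega}H_{\omega},\mu\big)_{L^{2}}$. Then for every $F$ in this space and every $g\in H$,
\[\langle F,T^{*}g\rangle=\langle TF,g\rangle=\int_{\Omega}\langle\Lambda_{\omega}^{*}F(\omega),g\rangle\,d\mu(\omega)=\int_{\Omega}\langle F(\omega),\Lambda_{\omega}g\rangle\,d\mu(\omega)=\big\langle F,\{\Lambda_{\omega}g\}_{\omega\in\Omega}\big\rangle.\]
Since $F$ is arbitrary, $T^{*}g=\{\Lambda_{\omega}g\}_{\omega\in\Omega}$, i.e. $T^{*}(g)(\omega)=\Lambda_{\omega}g$ for $\mu$-a.e. $\omega$, which is the final assertion.

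The routine parts are the two Cauchy--Schwarz estimates and the invocation of Riesz; the only point needing care is the measurability bookkeeping — checking that $\omega\mapsto\langle F(\omega),\Lambda_{\omega}g\rangle$ is measurable and that $\{\Lambda_{\omega}g\}_{\omega\in\Omega}$ is genuinely strongly measurable, so that the displayed integrals are meaningful and $\{\Lambda_{\omega}g\}_{\omega\in\Omega}$ really lies in the Hilbert space $\big(\oplus_{\omega\in\Omega}H_{\omega},\mu\big)_{L^{2}}$. Once this is settled, the argument proceeds exactly as above.
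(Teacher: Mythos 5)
Your proof is correct: the paper itself states this result without proof (citing Abdollahpour--Faroughi \cite{1}), and your argument via the double Cauchy--Schwarz estimate, the Riesz representation theorem (applied to the bounded conjugate-linear functional $g\mapsto\int_{\Omega}\langle\Lambda_{\omega}^{*}F(\omega),g\rangle\,d\mu(\omega)$), and the identification $T^{*}g=\{\Lambda_{\omega}g\}_{\omega\in\Omega}$ is exactly the standard proof given in that reference. The measurability bookkeeping you flag is handled correctly, so nothing further is needed.
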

 \section{Continuous K-g-frame}

In this section, we introduce the continuous version of K-g-frames.

\begin{definition}
Suppose that $(\Omega, \mu)$ is a measure space with positive measure $\mu$ and $K\in B(H)$. A family $\Lambda=\{\Lambda_{\omega}\in B(H, H_{\omega}):~\omega\in\Omega\}$, which $\{H_{\omega}\}_{\omega\in\Omega}$ is a family of Hilbert spaces, is called a continuous K-g-frame, or simply, a c-K-g-frame for $H$ with respect to $\{H_{\omega}\}_{\omega\in \Omega}$, if
\begin{enumerate}
\item[(i)]
for each $f\in H$; $\{\Lambda_{\omega}f\}_{\omega\in\Omega}$ is strongly measurable,
\item[(ii)]
there exist constants  $0<A\leq B<\infty$ such that
\begin{equation}\label{e}
A\|K^{*}f\|^{2}\leq \int_{\Omega} \|\Lambda_{\omega}f\|^{2} d\mu (\omega)\leq B\|f\|^{2},~~ f\in H.
\end{equation}
The constants $A$, $B$ are called lower and upper c-K-g-frame bounds, respectively.
\end{enumerate}
If $A$, $B$ can be chosen such that $A=B$, then $\{\Lambda_{\omega}\}_{\omega\in\Omega}$ is called a tight c-K-g-frame and if $A=B=1$, it is called Parseval c-K-g-frame. A family $\{\Lambda_{\omega}\}_{\omega\in\Omega}$ is called a c-g-Bessel family if the right hand inequality in (\ref{e}) holds. In this case, $B$ is called the Bessel constant.
\end{definition}
\vspace{2mm}
 Every c-K-g-frame is a c-g-Bessel family for $H$ with respect to $\{H_{\omega}\}_{\omega\in\Omega}$.
When $K=I$, a c-K-g-frame is a c-g-frame as defined in Definition \ref{f}.
\begin{example}
Suppose that $H$ is an infinite dimensional separable Hilbert space and $\{e_n\}_{n=1}^{\infty}$ is an orthonormal basis for $H$. Define the operator $K\in B(H)$ as follow:
$$K e_{2n}=e_{2n}+e_{2n-1}; ~~K e_{2n-1}=0, ~~n=1,2,...~.$$

For each $f\in H$, we have
\begin{eqnarray*}
Kf=K\big(\sum_{n=1}^{\infty}\langle f,e_n\rangle e_n\big)
&=&K\big(\sum_{n=1}^{\infty}\langle f,e_{2n}\rangle e_{2n}+\sum_{n=1}^{\infty}\langle f,e_{2n-1}\rangle e_{2n-1}\big)
\\&=&\sum_{n=1}^{\infty}\langle f,e_{2n}\rangle (e_{2n}+e_{2n-1}).
\end{eqnarray*}
By an easy calculation, the adjoint operator $K^*$ is given by
$$K^*f=\sum_{n=1}^{\infty}\langle f,e_{2n}+e_{2n-1}\rangle  e_{2n},~~f\in H.$$
Also,
\begin{eqnarray*}
\|K^*f\|^2 &=&\big\|\sum_{n=1}^{\infty}\langle f,e_{2n}+e_{2n-1}\rangle  e_{2n} \big\|^2
=\sum_{n=1}^{\infty} | \langle f,e_{2n}+e_{2n-1}\rangle  |^2
\\&&\leq 2\sum_{n=1}^{\infty} | \langle f,e_{2n}\rangle|^2+2\sum_{n=1}^{\infty} | \langle f,e_{2n-1}\rangle|^2
\leq 4\|f\|^2,
\end{eqnarray*}
So
$$ \|K^*f\|^2 \leq \sum_{n=1}^{\infty} | \langle f,e_{2n}+e_{2n-1}\rangle |^2 \leq 4\|f\|^2,$$
that is, $\{f_n\}_{n=1}^{\infty}=\{e_{2n}+e_{2n-1}\}_{n=1}^{\infty}$ is a $K$-frame for $H$.
Now, let $(\Omega,\mu)$ be a $\sigma$-finite measure space with infinite measure and $\{H_{\omega}\}_{\omega\in\Omega}$ be a family of Hilbert spaces. Since $\Omega$ is $\sigma$-finite, it can be written as a disjoint union  $\Omega=\bigcup \Omega_k$ of countably many subsets $\Omega_k\subseteq \Omega$ such that $\mu(\Omega_k)<\infty$ for all $k\in \mathbb{N}$. Without less of generality, assume that $\mu(\Omega_k)>0$ for all $k\in \mathbb{N}$. For each $\omega \in \Omega$, define the operator $\Lambda_\omega : H\longrightarrow H_\omega$ by
$$\Lambda_\omega (f)=\frac{1}{\mu(\Omega_k)} \langle f,f_k\rangle h_\omega,~ f\in H, $$
where $k$ is such that $\omega \in \Omega_k$ and $h_\omega$ is an arbitrary element of $H_\omega$ such that $\|h_\omega\|=1$. For each $f\in H$, $\{\Lambda_\omega f\}_{ \omega\in \Omega}$ is strongly measurable (since $h_\omega$'s are fixed) and
$$\int_{\Omega} \|\Lambda_\omega f\|^2 d\mu (\omega) = \sum_{n=1}^{\infty} | \langle f,f_n \rangle  |^2.$$
Therefore
$$\|K^*f\|^2 \leq \int_{\Omega} \|\Lambda_\omega f\|^2 d\mu (\omega) = \sum_{n=1}^{\infty} | \langle f,f_n \rangle|^2 \leq 4\|f\|^2,$$
that is, $\{\Lambda_\omega f\}_{ \omega\in \Omega}$ is  a c-K-g-frame for $H$ with respect to $\{H_{\omega}\}_{\omega\in \Omega}$.
\end{example}

\begin{definition}
Suppose that $\{\Lambda_{\omega}\}_{\omega\in\Omega}$ is a c-K-g-frame for $H$ with respect to $\{H_{\omega}\}_{\omega\in\Omega}$ with frame bounds $A$, $B$. We define $S: H\longrightarrow H$ by
\begin{equation*}
\langle Sf, g \rangle= \int_{\Omega} \langle f, \Lambda_{\omega}^{*} \Lambda_{\omega}g \rangle d\mu (\omega),~ f, g\in H,
\end{equation*}
and we call it the c-K-g-frame operator.
\end{definition}
\vspace{2mm}
The following Lemma characterizes a c-K-g-frame by its frame operator.
\begin{lemma}\label{x}
Let $\{\Lambda_{\omega}\}_{\omega\in \Omega}$ be a c-g-Bessel family  for $H$ with respect to $\{H_{\omega}\}_{\omega\in\Omega}$. Then $\{\Lambda_{\omega}\}_{\omega\in\Omega}$ is a c-K-g-frame for $H$ with respect to $\{H_{\omega}\}_{\omega\in\Omega}$ if and only if there exists constant $A>0$ such that $S\geq AKK^{*}$, where $S$ is the frame operator of $\{\Lambda_{\omega}\}_{\omega\in \Omega}$.
\end{lemma}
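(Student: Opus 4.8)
The plan is to reduce the whole statement to a single operator identity, namely $\langle Sf,f\rangle=\int_{\Omega}\|\Lambda_{\omega}f\|^{2}\,d\mu(\omega)$ for all $f\in H$, after which both implications become a direct rephrasing of the definition of the operator order.

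First I would invoke Theorem \ref{g}: since $\{\Lambda_{\omega}\}_{\omega\in\Omega}$ is a c-g-Bessel family with bound $B$, the synthesis operator $T\colon \big(\oplus_{\omega\in\Omega}H_{\omega},\mu\big)_{L^{2}}\to H$ is well defined and bounded with $\|T\|\le\sqrt{B}$, and $T^{*}g$ is the element $\omega\mapsto\Lambda_{\omega}g$. Hence $S:=TT^{*}$ is a bounded, self-adjoint, positive operator on $H$, and for all $f,g\in H$,
\[
\langle Sf,g\rangle=\langle TT^{*}f,g\rangle=\langle T^{*}f,T^{*}g\rangle=\int_{\Omega}\langle\Lambda_{\omega}f,\Lambda_{\omega}g\rangle\,d\mu(\omega),
\]
so this $S$ coincides with the c-K-g-frame operator defined just above the lemma. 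Taking $g=f$ gives $\langle Sf,f\rangle=\|T^{*}f\|_{2}^{2}=\int_{\Omega}\|\Lambda_{\omega}f\|^{2}\,d\mu(\omega)$. Moreover, for any $f\in H$ one has $\|K^{*}f\|^{2}=\langle K^{*}f,K^{*}f\rangle=\langle KK^{*}f,f\rangle$, with $KK^{*}$ bounded, self-adjoint and positive.

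With these identities the two directions are immediate. Recall that for self-adjoint operators $P,Q$ on the complex Hilbert space $H$ the relation $P\ge Q$ means $\langle Pf,f\rangle\ge\langle Qf,f\rangle$ for all $f\in H$. Thus $S\ge AKK^{*}$ for some $A>0$ if and only if $\int_{\Omega}\|\Lambda_{\omega}f\|^{2}\,d\mu(\omega)=\langle Sf,f\rangle\ge A\langle KK^{*}f,f\rangle=A\|K^{*}f\|^{2}$ for every $f\in H$, i.e. if and only if the lower inequality in (\ref{e}) holds with constant $A$. For the forward direction, a c-K-g-frame satisfies this lower inequality by definition, so $S\ge AKK^{*}$. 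Conversely, if $S\ge AKK^{*}$ with $A>0$, then the lower inequality holds, while the upper bound $\int_{\Omega}\|\Lambda_{\omega}f\|^{2}\,d\mu(\omega)\le B\|f\|^{2}$ and the strong measurability of $\{\Lambda_{\omega}f\}_{\omega\in\Omega}$ are part of the c-g-Bessel hypothesis; hence $\{\Lambda_{\omega}\}_{\omega\in\Omega}$ is a c-K-g-frame.

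There is no real obstacle here; the only two points deserving a word of care are (i) that the weakly defined integral for $S$ genuinely yields the bounded positive operator $TT^{*}$ — which is precisely what Theorem \ref{g} provides — and (ii) that a nonnegative quadratic form on a complex Hilbert space forces the underlying self-adjoint operator to be positive, so that the pointwise estimate and the operator inequality $S\ge AKK^{*}$ are truly equivalent, not just one-directional.
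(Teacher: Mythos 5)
Your proposal is correct and follows essentially the same route as the paper: identify $\langle Sf,f\rangle$ with $\int_{\Omega}\|\Lambda_{\omega}f\|^{2}\,d\mu(\omega)$ and observe that the lower c-K-g-frame inequality is exactly the quadratic-form statement of $S\geq AKK^{*}$, the Bessel hypothesis supplying the upper bound. The only difference is that you spell out, via Theorem \ref{g}, that the weakly defined $S$ equals $TT^{*}$ and is bounded and positive — a detail the paper leaves implicit.
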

\begin{proof}
The family $\{\Lambda_{\omega}\}_{\omega\in \Omega}$ is a c-K-g-frame for $H$ with bound $A$, $B$ if and only if
\begin{equation*}
A\|K^{*}f\|^{2}\leq \int_{\Omega} \|\Lambda_{\omega}f\|^{2} d\mu (\omega)\leq B\|f\|^{2},~~ f\in H,
\end{equation*}
that is\[\langle AKK^{*}f, f \rangle  \,  \leq ~ \langle Sf, f \rangle \, \leq \langle Bf, f \rangle,\quad f\in H,\]
where $S$ is the c-K-g-frame operator of $\{\Lambda_{\omega}\}_{\omega\in \Omega}$. Therefore, the conclusion holds.
\end{proof}
\vspace{2mm}
\begin{theorem}\label{h}
Let $(\Omega, \mu)$ be a measure space, where $\mu$ is $\sigma$-finite. Suppose that $\{\Lambda_{\omega}\in B(H, H_{\omega})~:~\omega\in\Omega\}$ is a family of operators such that  $\{\Lambda_{\omega}f\}_{\omega\in \Omega}$ is strongly measurable for each $f\in H$ and $K\in B(H)$. Then $\{\Lambda_{\omega}\}_{\omega\in \Omega}$ is a c-K-g-frame for $H$ with respect to $\{H_{\omega}\}_{\omega\in \Omega}$ if and only if the operator
\[T: \Big(\oplus_{\omega\in \Omega}H_{\omega}, \mu\Big)_{L^{2}}\longrightarrow H\]
weakly defined by
\[\langle TF, g \rangle= \int_{\Omega} \langle\Lambda^{*}_{\omega}F(\omega), g \rangle d\mu(\omega),~F\in \Big(\oplus_{\omega\in\Omega}H_{\omega}, \mu\Big)_{L^{2}},~~g\in H,\]
is bounded and $R(K)\subseteq R(T)$.
\end{theorem}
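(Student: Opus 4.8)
The plan is to handle the two implications separately, in both of them translating the lower frame inequality into the range inclusion $R(K)\subseteq R(T)$ by means of the operator-theoretic equivalence usually attributed to Douglas: for bounded operators $U,V$ with common codomain $H$ one has $R(U)\subseteq R(V)$ if and only if $UU^{*}\le\lambda^{2}VV^{*}$ for some $\lambda\ge0$, equivalently $\|U^{*}f\|\le\lambda\|V^{*}f\|$ for all $f\in H$. I would record this as a preliminary lemma (it also follows from the pseudo-inverse, Lemma~\ref{k}) and then invoke it in both directions.

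For the forward implication, I would first observe that a c-K-g-frame with bounds $A,B$ is in particular a c-g-Bessel family with bound $B$, so Theorem~\ref{g} gives that $T$ is well defined and bounded with $\|T\|\le\sqrt{B}$ and that $T^{*}(g)(\omega)=\Lambda_{\omega}g$; hence $\|T^{*}f\|_{2}^{2}=\int_{\Omega}\|\Lambda_{\omega}f\|^{2}\,d\mu(\omega)$. The lower c-K-g-frame inequality then reads $\sqrt{A}\,\|K^{*}f\|\le\|T^{*}f\|_{2}$ for every $f\in H$, and the equivalence above, applied with $U=K$ and $V=T$, yields $R(K)\subseteq R(T)$.

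For the converse, assume $T$ is bounded and $R(K)\subseteq R(T)$. The first task is to recover the c-g-Bessel property from boundedness of $T$ alone, and this is the step where $\sigma$-finiteness of $\mu$ is used. I would write $\Omega=\bigcup_{k}\Omega_{k}$ with $\mu(\Omega_{k})<\infty$, fix $g\in H$, and truncate by setting $E_{n}=\{\omega\in\bigcup_{k\le n}\Omega_{k}:\ \|\Lambda_{\omega}g\|\le n\}$, which is measurable (since $\omega\mapsto\|\Lambda_{\omega}g\|$ is measurable by strong measurability of $\{\Lambda_{\omega}g\}_{\omega}$) and increases to $\Omega$. Then $F_{n}(\omega):=\chi_{E_{n}}(\omega)\,\Lambda_{\omega}g$ lies in $\big(\oplus_{\omega\in\Omega}H_{\omega},\mu\big)_{L^{2}}$, evaluating $\langle TF_{n},g\rangle$ from the defining formula gives $\int_{E_{n}}\|\Lambda_{\omega}g\|^{2}\,d\mu(\omega)$, while the estimate $|\langle TF_{n},g\rangle|\le\|T\|\,\|F_{n}\|_{2}\,\|g\|$ bounds the same quantity by $\|T\|\,\|g\|\,\big(\int_{E_{n}}\|\Lambda_{\omega}g\|^{2}\,d\mu(\omega)\big)^{1/2}$. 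Combining these and letting $n\to\infty$ (monotone convergence) yields $\int_{\Omega}\|\Lambda_{\omega}g\|^{2}\,d\mu(\omega)\le\|T\|^{2}\|g\|^{2}$, so $\{\Lambda_{\omega}\}_{\omega\in\Omega}$ is a c-g-Bessel family with bound $\|T\|^{2}$. Now Theorem~\ref{g} applies, so $T^{*}(g)(\omega)=\Lambda_{\omega}g$ and a short computation identifies the c-K-g-frame operator with $TT^{*}$:
\[\langle Sf,g\rangle=\int_{\Omega}\langle\Lambda_{\omega}f,\Lambda_{\omega}g\rangle\,d\mu(\omega)=\langle T^{*}f,T^{*}g\rangle=\langle TT^{*}f,g\rangle,\qquad f,g\in H.\]
The case $K=0$ being trivial, assume $K\ne0$; then $R(K)\subseteq R(T)$ together with the equivalence above gives $KK^{*}\le\lambda^{2}TT^{*}=\lambda^{2}S$ for some $\lambda>0$, i.e.\ $S\ge\tfrac{1}{\lambda^{2}}KK^{*}$, and Lemma~\ref{x} concludes that $\{\Lambda_{\omega}\}_{\omega\in\Omega}$ is a c-K-g-frame (with lower bound $1/\lambda^{2}$ and upper bound $\|T\|^{2}$).

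I expect the main obstacle to be the second step of the converse: upgrading ``$T$ is bounded'' to the genuine c-g-Bessel inequality and, with it, the identification $T^{*}(g)(\omega)=\Lambda_{\omega}g$. Since $\omega\mapsto\|\Lambda_{\omega}\|$ need not be bounded, pointwise Cauchy--Schwarz estimates cannot simply be integrated, so one is forced into the truncation argument above, which is precisely where $\sigma$-finiteness of $\mu$ and strong measurability are indispensable. Everything after that is a routine combination of the Douglas-type equivalence and Lemma~\ref{x}.
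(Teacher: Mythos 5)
Your proof is correct and follows essentially the same route as the paper: in both directions the lower c-K-g-frame inequality is traded for the range inclusion $R(K)\subseteq R(T)$ via Douglas' majorization--range-inclusion theorem (the paper's ``Theorem 1 of \cite{6}''), with Theorem~\ref{g} supplying boundedness of $T$ and the identification $T^{*}(g)(\omega)=\Lambda_{\omega}g$ in the forward direction, and Lemma~\ref{x} (or, equivalently, the identity $S=TT^{*}$) closing the converse. The only real divergence is in the converse step where the Bessel bound $\int_{\Omega}\|\Lambda_{\omega}g\|^{2}\,d\mu(\omega)\leq\|T\|^{2}\|g\|^{2}$ is established: the paper simply cites Lemma 2.10 of \cite{1} to conclude that $T^{*}g\in\big(\oplus_{\omega\in\Omega}H_{\omega},\mu\big)_{L^{2}}$ with $T^{*}g(\omega)=\Lambda_{\omega}g$, whereas you reprove this from scratch by the truncation $E_{n}=\{\omega\in\bigcup_{k\leq n}\Omega_{k}:\|\Lambda_{\omega}g\|\leq n\}$ and monotone convergence; this makes the argument self-contained and shows explicitly where $\sigma$-finiteness and strong measurability enter, at the cost of a slightly longer proof of a fact the paper outsources. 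Your explicit treatment of the trivial case $K=0$ and the bounds ($1/\lambda^{2}$ and $\|T\|^{2}$) are consistent with what the paper obtains.
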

\begin{proof}
Let $\{\Lambda_{\omega}\}_{\omega\in\Omega}$ be a c-K-g-frame. By Theorem \ref{g}, $T$ is a bounded operator.
By Lemma \ref{x}, for each $f\in H$,
\[ \langle AKK^*f, f \rangle =A\|K^{*}f\|^{2}\leq \, \langle Sf, f \rangle=\langle TT^*f, f \rangle.\]
Thus  by Theorem 1 of \cite{6}, $R(K)\subseteq R(T)$.
 Now suppose $T$ is a bounded and $R(K)\subseteq R(T)$.
 Let $g\in H$, $F\in\Big(\oplus_{\omega\in\Omega}H_{\omega}, \mu\Big)_{L^{2}}$, then
\[\langle F, T^{*}g \rangle=\langle TF, g \rangle=\int_{\Omega}\langle\Lambda_{\omega}^{*}F(\omega), g\rangle d\mu(\omega).\]
By Lemma 2.10 of \cite{1}, $T^{*}g\in\Big(\oplus_{\omega\in\Omega}H_{\omega}, \mu\Big)_{L^{2}}$, where $T^{*}g(\omega)=\Lambda_{\omega}g$, $\omega\in \Omega$. Therefore
\[\|T^{*}g\|^{2}=\int_{\Omega}\|\Lambda_{\omega}g\|^{2}d\mu(\omega)\leq \|T\|^{2}\|g\|^{2},\]
so $\{\Lambda_{\omega}\}_{\omega\in\Omega}$ is a c-g-Bessel. Now we show that $\{\Lambda_{\omega}\}_{\omega\in\Omega}$ has the lower  c-K-g-frame  condition. Since $R(K)\subseteq R(T)$, by Theorem 1 of \cite{6}, there exists  $A>0$,  such that
\[KK^{*}\leq ATT^{*}.\]
Hence,
\[\dfrac{1}{A}\|K^{*}f\|^{2}\leq \|T^{*}f\|^{2}=\int_{\Omega}\|\Lambda_{\omega}f\|^{2}d\mu(\omega),\quad f\in H.\]
\end{proof}
\vspace{2mm}
\noindent Note that, the frame operator of a c-K-g-frame is not positive and invertible on $H$ in general, but we can show that under some conditions, it is positive and invertible on the subspace $R(K)\subseteq H$. In fact, by assumption, $R(K)$ is closed, due to Lemma \ref{k}, there exists  pseudo-inverse $K^{\dagger}$ of $K$ such that $KK^{\dagger}f=f$, $ f\in R(K)$, therefore, $S$ is positive and invertible.

\section{Main results of c-K-g-frames}
At first, we give the following equivalent characterization of c-K-g-frames.

\begin{theorem}\label{z}
Let $K\in B(H)$. Then the following statements are equivalent.
\begin{enumerate}
\item[(1)]
$\{\Lambda_{\omega}\}_{\omega\in\Omega}$ is a c-K-g-frame for $H$ with respect to $\{H_{\omega}\}_{\omega\in\Omega}.$
\item[(2)]
$\{\Lambda_{\omega}\}_{\omega\in\Omega}$ is a c-g-Bessel family for $H$ with respect to $\{H_{\omega}\}_{\omega\in\Omega}$ and there exists a c-g-Bessel  family $\{\Gamma_{\omega}\}_{\omega\in\Omega}$ for $H$ with respect to $\{H_{\omega}\}_{\omega\in\Omega}$ such that
\begin{equation}\label{m}
[\langle Kf, h \rangle =\int_{\Omega}\langle\Lambda_{\omega}^{*}\Gamma_{\omega}f, h \rangle d\mu(\omega)~,~f, h\in H.]
\end{equation}
\end{enumerate}
\end{theorem}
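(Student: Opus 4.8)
The plan is to recast the integral identity (\ref{m}) as an operator factorization of $K$ and then read off both implications. Write $T_\Lambda$ and $T_\Gamma$ for the synthesis operators attached by Theorem \ref{g} to the c-g-Bessel families $\{\Lambda_{\omega}\}$ and $\{\Gamma_{\omega}\}$; these are bounded maps $(\oplus_{\omega\in\Omega}H_{\omega},\mu)_{L^{2}}\to H$ with $\|T_\Lambda\|\le\sqrt{B}$ and $\|T_\Gamma\|\le\sqrt{D}$, where $B$ and $D$ are the Bessel bounds, and their adjoints are the analysis operators $T_\Lambda^{*}f=\{\Lambda_{\omega}f\}_{\omega}$, $T_\Gamma^{*}f=\{\Gamma_{\omega}f\}_{\omega}$. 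The content of (\ref{m}) is then exactly the identity $K=T_\Lambda T_\Gamma^{*}$, equivalently (on taking adjoints) $K^{*}=T_\Gamma T_\Lambda^{*}$. I would establish this equivalence first, since afterwards everything reduces to manipulations with bounded operators.

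Granting that, the implication $(2)\Rightarrow(1)$ is immediate. The right-hand inequality in (\ref{e}) is just the Bessel condition on $\{\Lambda_{\omega}\}$; for the left-hand one I would estimate, for $f\in H$,
\[\|K^{*}f\|^{2}=\|T_\Gamma T_\Lambda^{*}f\|^{2}\le\|T_\Gamma\|^{2}\,\|T_\Lambda^{*}f\|_{2}^{2}\le D\int_{\Omega}\|\Lambda_{\omega}f\|^{2}\,d\mu(\omega),\]
so that $\tfrac{1}{D}\|K^{*}f\|^{2}\le\int_{\Omega}\|\Lambda_{\omega}f\|^{2}\,d\mu(\omega)$, while strong measurability is inherited from the c-g-Bessel hypothesis; hence $\{\Lambda_{\omega}\}$ is a c-K-g-frame.

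For $(1)\Rightarrow(2)$, a c-K-g-frame is in particular c-g-Bessel, so the task is to manufacture $\{\Gamma_{\omega}\}$. By Lemma \ref{x} the frame operator obeys $S\ge AKK^{*}$, and since $S$ factors through the synthesis operator as $S=T_\Lambda T_\Lambda^{*}$ this reads $KK^{*}\le\tfrac{1}{A}T_\Lambda T_\Lambda^{*}$; the operator factorization theorem (Theorem 1 of \cite{6}, already used in the proof of Theorem \ref{h}) then provides both $R(K)\subseteq R(T_\Lambda)$ and a bounded operator $L\colon H\to(\oplus_{\omega\in\Omega}H_{\omega},\mu)_{L^{2}}$ with $T_\Lambda L=K$. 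I would then define $\Gamma_{\omega}f:=(Lf)(\omega)$; for each $f$ the map $\omega\mapsto\Gamma_{\omega}f$ is strongly measurable because $Lf$ belongs to the direct-integral space, and $\int_{\Omega}\|\Gamma_{\omega}f\|^{2}\,d\mu(\omega)=\|Lf\|_{2}^{2}\le\|L\|^{2}\|f\|^{2}$ shows $\{\Gamma_{\omega}\}$ is c-g-Bessel with bound $\|L\|^{2}$. Finally, for $f,h\in H$,
\[\langle Kf,h\rangle=\langle T_\Lambda Lf,h\rangle=\langle Lf,T_\Lambda^{*}h\rangle=\int_{\Omega}\langle(Lf)(\omega),\Lambda_{\omega}h\rangle\,d\mu(\omega)=\int_{\Omega}\langle\Lambda_{\omega}^{*}\Gamma_{\omega}f,h\rangle\,d\mu(\omega),\]
which is (\ref{m}).

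The step I expect to be the main obstacle is, inside $(1)\Rightarrow(2)$, checking that the recipe $\Gamma_{\omega}f=(Lf)(\omega)$ really yields, for each fixed $\omega$, a well-defined bounded operator $\Gamma_{\omega}\in B(H,H_{\omega})$ — equivalently, that the abstract Douglas factor $L$ can be realized as the analysis operator of a genuine measurable field of bounded operators. Since pointwise evaluation on $(\oplus_{\omega\in\Omega}H_{\omega},\mu)_{L^{2}}$ is not, in general, bounded (its elements are equivalence classes, and $\mu$ need not be atomic), one cannot simply bound $\|\Gamma_{\omega}f\|$ by $\|L\|\,\|f\|$; I would handle this with the pointwise-evaluation / measurable-field apparatus of \cite{1} (the same circle of ideas as Lemma 2.10 there, invoked already in the proof of Theorem \ref{h}), which may require the $\sigma$-finiteness of $\mu$. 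Everything else is routine bookkeeping.
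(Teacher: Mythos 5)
Your proposal follows essentially the same route as the paper's proof: for $(1)\Rightarrow(2)$ it applies Douglas's factorization theorem to $KK^{*}\leq\frac{1}{A}T_{\Lambda}T_{\Lambda}^{*}$ and sets $\Gamma_{\omega}f=(Lf)(\omega)$ exactly as the paper does, and for $(2)\Rightarrow(1)$ your operator-norm estimate $\|K^{*}f\|^{2}\leq D\int_{\Omega}\|\Lambda_{\omega}f\|^{2}d\mu(\omega)$ is the same bound the paper obtains by putting $f=K^{*}h$ in (\ref{m}) and applying Cauchy--Schwarz. The well-definedness/measurability issue you flag for the pointwise operators $\Gamma_{\omega}$ is a genuine subtlety, but the paper's own proof passes over it silently in exactly the same way, so your argument is not weaker than the published one.
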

\begin{proof}
$(1)\Rightarrow (2)$ There are constants $A, B>0$ such that
\[A\|K^{*}f\|^{2}\leq\int_{\Omega}\|\Lambda_{\omega}f\|^{2}d\mu(\omega)\leq B\|f\|^{2},~f\in H,\]
Since
\[KK^{*}\leq \dfrac{1}{A}T_{\Lambda}T_{\Lambda}^{*}\]
  by Theorem 1 in \cite{6}, there exists a bounded operator $\Gamma\in B\Big(H, (\oplus_{\omega\in\Omega}H_{\omega}, \mu)_{L^{2}}\Big)$ such that $K=T_{\Lambda}\Gamma$. Now  for each $\omega\in \Omega$ and $ h\in H$, define
\begin{eqnarray*}
&~&\Gamma_{\omega}: H\longrightarrow H_{\omega}\\
&~&\Gamma_{\omega}(h)=(\Gamma h)(\omega),
\end{eqnarray*}
we have
\[\int_{\Omega} \|\Gamma_{\omega}h \|^{2}d\mu(\omega)=\int_{\Omega}\|(\Gamma h)(\omega)\|^{2}d\mu(\omega)=\|\Gamma h\|^{2}\leq \|\Gamma\|^{2}\|h\|^{2}.\]
So $\{\Gamma_{\omega}\}_{\omega\in\Omega}$ is a c-g-Bessel family.
\\
Also, we have
\[Kf=T_{\Lambda}\Gamma f=T_{\Lambda}\Big(\{(\Gamma f)(\omega)\}_{\omega\in\Omega}\Big),\quad f\in H,\]
hence
\begin{equation*}
\langle Kf, h \rangle=\int_{\Omega} \langle \Lambda_{\omega}^{*}\Gamma_{\omega}f, h \rangle d\mu(\omega),~f, h\in H.
\end{equation*}
$(2)\Rightarrow (1)$ It is enough to show that $\{\Lambda_{\omega}\}_{\omega\in \Omega}$ has the lower frame condition. Set $f=K^{*}h$ in (\ref{m}), so
\begin{eqnarray*}
\langle KK^{*}h, h \rangle&=&|\langle K^{*}h, K^{*}h \rangle|=|\int_{\Omega}\langle \Lambda_{\omega}^{*}\Gamma_{\omega}K^{*}h, h\rangle d\mu(\omega)|\\&=&|\int_{\Omega}\langle \Gamma_{\omega}K^{*}h, \Lambda_{\omega}h\rangle d\mu(\omega)|\\
&\leq &\int_{\Omega}|\langle \Gamma_{\omega}K^{*}h, \Lambda_{\omega}h\rangle| d\mu(\omega)\\
&\leq &\int_{\Omega}\|\Gamma_{\omega}K^{*}h\|\|\Lambda_{\omega}h\|d\mu(\omega)\\
&\leq&\Big(\int_{\Omega}\|\Gamma_{\omega}K^{*}h\|^{2}d\mu(\omega)\Big)^{\frac{1}{2}}\Big(\int_{\Omega}\|\Lambda_{\omega}h\|^{2}d\mu(\omega)\Big)^{\frac{1}{2}}\\
&\leq&(B\|K^{*}h\|^{2})^{\frac{1}{2}}\Big(\int_{\Omega}\|\Lambda_{\omega}h\|^{2}d\mu(\omega)\Big)^{\frac{1}{2}},
\end{eqnarray*}
where B is the Bessel constant of $\{\Gamma_{\omega}\}_{\omega\in\Omega}$.
Hence
\[\dfrac{1}{\sqrt{B}}\|K^{*}h\|\leq\Big(\int_{\Omega} \|\Lambda_{\omega}h\|^{2}d\mu(\omega)\Big)^{\frac{1}{2}}.\]
\end{proof}
Note that $\{\Lambda_{\omega}\}_{\omega\in\Omega}$ and $\{\Gamma_{\omega}\}_{\omega\in\Omega}$ are not interchangeable in general. However, if we strengthen the condition, there exists another type of dual such that $\{\Lambda_{\omega}\}_{\omega\in\Omega}$ and a family $\{\Theta_{\omega}\}_{\omega\in\Omega}$ introduced by $\{\Gamma_{\omega}\}_{\omega\in\Omega}$ are interchangeable in the subspace $R(K)$ of $H$.
\begin{theorem}
Let $K\in B(H)$, $K$ be with closed range, $\{\Lambda_{\omega}\}_{\omega\in\Omega}$ and $\{\Gamma_{\omega}\}_{\omega\in\Omega}$ be two c-g-Bessel families as in (\ref{m}). Then there exists a family $\{\Theta_{\omega}\}_{\omega\in\Omega}$ such that
\[\langle f, h \rangle=\int_{\Omega}\langle \Lambda_{\omega}^{*}\Theta_{\omega}f, h \rangle d\mu(\omega),~f\in R(K)~,~h\in H\]
where $\Theta_{\omega}=\Gamma_{\omega}\Big(K^{\dagger}|_{R(K)}\Big)$. Furthermore, $\{\Lambda_{\omega}\}_{\omega\in \Omega}$ and $\{\Theta_{\omega}\}_{\omega\in \Omega}$ are interchangeable for any $f\in R(K)$.
\end{theorem}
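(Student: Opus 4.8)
The plan is to use the bounded pseudo-inverse of $K$ to ``correct'' the c-g-Bessel family $\{\Gamma_{\omega}\}_{\omega\in\Omega}$ supplied by Theorem~\ref{z}, so that the corrected family reconstructs the identity on $R(K)$ rather than $K$ itself. Since $K$ has closed range, Lemma~\ref{k} gives a bounded pseudo-inverse $K^{\dagger}\colon H\to H$ with $KK^{\dagger}f=f$ for every $f\in R(K)$. Set $\Theta_{\omega}=\Gamma_{\omega}\big(K^{\dagger}|_{R(K)}\big)\colon R(K)\to H_{\omega}$. Because $K^{\dagger}$ is bounded and linear and $\{\Gamma_{\omega}g\}_{\omega\in\Omega}$ is strongly measurable for each $g\in H$, the family $\{\Theta_{\omega}f\}_{\omega\in\Omega}$ is strongly measurable for each $f\in R(K)$; moreover, if $B$ is the Bessel bound of $\{\Gamma_{\omega}\}_{\omega\in\Omega}$, then
\[\int_{\Omega}\|\Theta_{\omega}f\|^{2}\,d\mu(\omega)=\int_{\Omega}\|\Gamma_{\omega}K^{\dagger}f\|^{2}\,d\mu(\omega)\le B\,\|K^{\dagger}\|^{2}\,\|f\|^{2},\qquad f\in R(K),\]
so $\{\Theta_{\omega}\}_{\omega\in\Omega}$ is again a c-g-Bessel family.

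Next I would prove the reconstruction formula. Fix $f\in R(K)$ and put $g=K^{\dagger}f\in H$, so that $Kg=KK^{\dagger}f=f$. Applying the identity (\ref{m}) with $g$ in place of $f$ then gives, for every $h\in H$,
\[\langle f,h\rangle=\langle Kg,h\rangle=\int_{\Omega}\langle\Lambda_{\omega}^{*}\Gamma_{\omega}g,h\rangle\,d\mu(\omega)=\int_{\Omega}\langle\Lambda_{\omega}^{*}\Theta_{\omega}f,h\rangle\,d\mu(\omega),\]
which is exactly the asserted formula.

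For the interchangeability I would restrict to $f,h\in R(K)$ and use the symmetry of the scalar $\langle f,h\rangle$. Rewriting the formula just obtained as $\langle f,h\rangle=\int_{\Omega}\langle\Theta_{\omega}f,\Lambda_{\omega}h\rangle\,d\mu(\omega)$ and interchanging the roles of $f$ and $h$ (legitimate since both lie in $R(K)$) yields $\langle h,f\rangle=\int_{\Omega}\langle\Theta_{\omega}h,\Lambda_{\omega}f\rangle\,d\mu(\omega)$; taking complex conjugates of this identity gives $\langle f,h\rangle=\int_{\Omega}\langle\Lambda_{\omega}f,\Theta_{\omega}h\rangle\,d\mu(\omega)=\int_{\Omega}\langle\Theta_{\omega}^{*}\Lambda_{\omega}f,h\rangle\,d\mu(\omega)$. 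Hence
\[\langle f,h\rangle=\int_{\Omega}\langle\Lambda_{\omega}^{*}\Theta_{\omega}f,h\rangle\,d\mu(\omega)=\int_{\Omega}\langle\Theta_{\omega}^{*}\Lambda_{\omega}f,h\rangle\,d\mu(\omega),\qquad f,h\in R(K),\]
which is the claimed interchangeability of $\{\Lambda_{\omega}\}_{\omega\in\Omega}$ and $\{\Theta_{\omega}\}_{\omega\in\Omega}$ on $R(K)$.

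I do not expect a real obstacle here: the whole argument rests on the single fact that $KK^{\dagger}$ is the identity on $R(K)$ together with the Cauchy--Schwarz/Bessel estimates already used in the proof of Theorem~\ref{z}. The only mildly delicate points are the strong measurability of $\{\Theta_{\omega}f\}_{\omega\in\Omega}$, which is immediate from the boundedness of $K^{\dagger}$, and the bookkeeping of domains when passing to the adjoints $\Theta_{\omega}^{*}\colon H_{\omega}\to R(K)$ --- this is precisely why the interchangeability is naturally formulated on the subspace $R(K)$ rather than on all of $H$.
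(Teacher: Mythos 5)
Your proposal is correct and follows essentially the same route as the paper: bound $\int_\Omega\|\Theta_\omega f\|^2\,d\mu(\omega)$ by $B\|K^\dagger\|^2\|f\|^2$ to get the Bessel property, use $KK^\dagger f=f$ on $R(K)$ in identity (\ref{m}) to obtain the reconstruction formula, and derive interchangeability by conjugating and swapping $f$ and $h$ in that formula (the paper conjugates first and then renames, which is the same computation). The extra remark on strong measurability of $\{\Theta_\omega f\}_{\omega\in\Omega}$ is a harmless addition not spelled out in the paper.
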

\begin{proof}
By the assumption, $R(K)$ is closed, there exists pseudo-inverse $K^{\dagger}$ of $K$ such that $f=KK^{\dagger}f~,~ f\in R(K)$. We have from (\ref{m}) that
\[\langle f, h \rangle=\langle KK^{\dagger}f, h \rangle=\int_{\Omega}\langle \Lambda_{\omega}^{*}\Gamma_{\omega}K^{\dagger}f, h \rangle d\mu(\omega),~f\in R(K),\]
where
 $\{\Lambda_{\omega}\}_{\omega\in\Omega}$
 and
  $\{\Gamma_{\omega}\}_{\omega\in\Omega}$
  are c-g-Bessel families for $H$ with respect to $\{H_{\omega}\}_{\omega\in\Omega}$ and satisfy (\ref{m}). Now, let $\Theta_{\omega}=\Gamma_{\omega}\Big(K^{\dagger}|_{R(K)}\Big)$. Since \linebreak
  $K^{\dagger}|_{R(K)}: R(K)\longrightarrow H$ and $\Gamma_{\omega}: H\longrightarrow H_{\omega}$, so we have $\Theta_{\omega}: R(K)\longrightarrow H_{\omega}$. For any $f\in R(K)$, $K^{\dagger}f\in H$, so
\[\int_{\Omega}\|\Theta_{\omega}f\|^{2}d\mu(\omega)=\int_{\Omega}\|\Gamma_{\omega}K^{\dagger}f\|^{2}d\mu(\omega)\leq B\|K^{\dagger}f\|^{2}\leq B\|K^{\dagger}\|^{2}\|f\|^{2}.\]
Hence, $\{\Theta_{\omega}\}_{\omega\in \Omega}$ is a c-g-Bessel family for $R(K)$ with respect to $\{H_{\omega}\}_{\omega\in \Omega}$. Now, we show that $\{\Lambda_{\omega}\}_{\omega\in \Omega}$  and $\{\Theta_{\omega}\}_{\omega\in \Omega}$ are interchangeable on $R(K)$. In fact, for any $f, h\in R(K)$, we have
\begin{eqnarray*}
\langle f, h \rangle= \int_{\Omega} \langle\Lambda_{\omega}^{*}\Theta_{\omega}f, h \rangle d\mu(\omega)
=\overline{\int_{\Omega}\langle \Theta_{\omega}^{*}\Lambda_{\omega}h, f \rangle d\mu(\omega)},
\end{eqnarray*}
that is
\[\langle h, f \rangle=\int_{\Omega}\langle \Theta_{\omega}^{*}\Lambda_{\omega}h, f\rangle d\mu(\omega).\]
Hence
\[\langle f, h \rangle=\int_{\Omega}\langle\Theta_{\omega}^{*}\Lambda_{\omega}f, h\rangle d\mu(\omega),\quad f, h\in R(K).\]
\end{proof}
\begin{theorem}
Let $K\in B(H)$ and $\{\Lambda_{\omega}\}_{\omega\in\Omega}$ be a c-g-frame for $H$ with respect to $\{H_{\omega}\}_{\omega\in\Omega}$. Then $\{\Lambda_{\omega}K^{*}\}_{\omega\in\Omega}$ is a c-K-g-frame for $H$ with respect to $\{H_{\omega}\}_{\omega\in\Omega}$.
\end{theorem}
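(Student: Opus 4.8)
The plan is to obtain the two defining inequalities of a c-K-g-frame for the family $\{\Lambda_{\omega}K^{*}\}_{\omega\in\Omega}$ directly, by substituting $K^{*}f$ into the c-g-frame inequalities for $\{\Lambda_{\omega}\}_{\omega\in\Omega}$. Put $\Gamma_{\omega}:=\Lambda_{\omega}K^{*}\in B(H,H_{\omega})$. First I would check strong measurability: for fixed $f\in H$ the family $\{\Gamma_{\omega}f\}_{\omega\in\Omega}=\{\Lambda_{\omega}(K^{*}f)\}_{\omega\in\Omega}$ is strongly measurable, since $\{\Lambda_{\omega}g\}_{\omega\in\Omega}$ is strongly measurable for every $g\in H$ and $K^{*}f\in H$.

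Next, let $A,B$ be c-g-frame bounds for $\{\Lambda_{\omega}\}_{\omega\in\Omega}$, so that $A\|g\|^{2}\le\int_{\Omega}\|\Lambda_{\omega}g\|^{2}\,d\mu(\omega)\le B\|g\|^{2}$ for all $g\in H$. Taking $g=K^{*}f$ yields
$$A\|K^{*}f\|^{2}\le\int_{\Omega}\|\Lambda_{\omega}K^{*}f\|^{2}\,d\mu(\omega)\le B\|K^{*}f\|^{2}\le B\|K\|^{2}\|f\|^{2},\qquad f\in H,$$
where the last step uses $\|K^{*}\|=\|K\|$. The left-hand inequality is exactly the lower c-K-g-frame condition with bound $A$, and the right-hand one is the upper condition with bound $B\|K\|^{2}$; if $K\neq 0$ both constants are positive, and the case $K=0$ is trivial because then $\Lambda_{\omega}K^{*}=0$ and any positive numbers serve as frame bounds. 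Hence $\{\Lambda_{\omega}K^{*}\}_{\omega\in\Omega}$ is a c-K-g-frame for $H$ with respect to $\{H_{\omega}\}_{\omega\in\Omega}$.

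Alternatively, one can route the argument through Lemma \ref{x}: a short computation, using $(\Lambda_{\omega}K^{*})^{*}=K\Lambda_{\omega}^{*}$, shows that the c-g-frame operator of $\{\Lambda_{\omega}K^{*}\}_{\omega\in\Omega}$ equals $KS_{\Lambda}K^{*}$, where $S_{\Lambda}$ is the c-g-frame operator of $\{\Lambda_{\omega}\}_{\omega\in\Omega}$. Since $AI\le S_{\Lambda}\le BI$, conjugating by $K$ gives $AKK^{*}\le KS_{\Lambda}K^{*}\le B\|K\|^{2}I$, so the new family is c-g-Bessel and its frame operator dominates $AKK^{*}$; Lemma \ref{x} then gives the claim again.

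There is essentially no serious obstacle here: the only points requiring any attention are the strong-measurability bookkeeping and the observation that the upper bound picks up the extra factor $\|K\|^{2}$. The statement is the continuous, generalized analogue of the elementary fact that post-composing the analysis maps of a frame with $K^{*}$ produces a $K$-frame.
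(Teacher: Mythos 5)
Your proof is correct, but it takes a genuinely different and more elementary route than the paper. You simply substitute $g=K^{*}f$ into the c-g-frame inequalities for $\{\Lambda_{\omega}\}_{\omega\in\Omega}$, obtaining at once the lower bound $A\|K^{*}f\|^{2}\le\int_{\Omega}\|\Lambda_{\omega}K^{*}f\|^{2}\,d\mu(\omega)$ and the upper bound $B\|K\|^{2}\|f\|^{2}$; this is the shortest possible argument and yields explicit frame bounds $A$ and $B\|K\|^{2}$ (your handling of measurability and of the degenerate case $K=0$ is fine, and your alternative via Lemma \ref{x}, writing the frame operator of the new family as $KS_{\Lambda}K^{*}\ge AKK^{*}$, is equally valid). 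The paper instead only verifies the Bessel estimate $\int_{\Omega}\|\Lambda_{\omega}K^{*}f\|^{2}\,d\mu(\omega)\le B\|K^{*}\|^{2}\|f\|^{2}$ directly and obtains the lower condition indirectly through the characterization of Theorem \ref{z}: using the reconstruction formula for the c-g-frame, it takes $\Gamma_{\omega}=\Lambda_{\omega}S^{-1}$ (the canonical dual family) and checks the factorization $\langle Kf,g\rangle=\int_{\Omega}\langle(\Lambda_{\omega}K^{*})^{*}\Gamma_{\omega}f,g\rangle\,d\mu(\omega)$. The paper's route is longer and hides the explicit lower bound, but it buys extra structural information, namely a concrete c-g-Bessel family implementing the ``duality'' relation of Theorem \ref{z} for $\{\Lambda_{\omega}K^{*}\}_{\omega\in\Omega}$; your route buys brevity and sharp, explicit bounds. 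Both are sound.
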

\begin{proof}
By Theorem \ref{z}, it suffices to show that $\{\Lambda_{\omega}K^{*}\}_{\omega\in\Omega}$ is a c-g-Bessel family and there exists a c-g-Bessel family $\{\Gamma_{\omega}\}_{\omega\in\Omega}$ such that
\[\langle Kf, h \rangle=\int_{\Omega}\langle (\Lambda_{\omega}K^{*})^{*}\Gamma_{\omega}f, h \rangle d\mu(\omega),\quad f, h\in H,\]
By the assumption, $\{\Lambda_{\omega}\}_{\omega\in\Omega}$ is a c-g-frame, so by Theorem 2.4 in [1], we have
\begin{equation}\label{r}
\langle f, g \rangle=\int_{\Omega}\langle \Lambda_{\omega}^{*}\Lambda_{\omega}S^{-1}f, g\rangle d\mu(\omega),\quad f, g\in H,
\end{equation}
where $S$ is the c-g-frame operator for $\{\Lambda_{\omega}\}_{\omega\in\Omega}$ and $\{\Lambda_{\omega}S^{-1}\}_{\omega\in\Omega}$ is the canonical dual
of $\{\Lambda_{\omega}\}_{\omega\in\Omega}$. For any $f\in H$, we have $K^{*}f\in H$. Then, it follows that
\[\int_{\Omega}\|\Lambda_{\omega}K^{*}f\|^{2}d\mu(\omega)\leq B\|K^{*}f\|^{2}\leq B\|K^{*}\|^{2}\|f\|^{2}.\]
Therefore $\{\Lambda_{\omega}K^{*}\}_{\omega\in\Omega}$ is a c-g-Bessel family for $H$ with respect to $\{H_{\omega}\}_{\omega\in\Omega}$. By (\ref{r}), we have
\begin{eqnarray*}
\langle Kf, g\rangle &=&\langle f, K^{*}g\rangle=\int_{\Omega}\langle \Lambda_{\omega}^{*}\Lambda_{\omega}S^{-1}f, K^{*}g \rangle d\mu(\omega)\\
&=&\int_{\Omega}\langle K\Lambda_{\omega}^{*}\Lambda_{\omega}S^{-1}f, g \rangle d\mu(\omega)\\
&=&\int_{\Omega}\langle (\Lambda_{\omega}K^{*})^{*}\Lambda_{\omega}S^{-1}f, g \rangle d\mu(\omega),
\end{eqnarray*}
Set $\Gamma_{\omega}=\Lambda_{\omega}S^{-1}$, we see that
\[\langle Kf, g \rangle=\int_{\Omega}\langle (\Lambda_{\omega}K^{*})^{*}\Gamma_{\omega}f, g \rangle d\mu(\omega),~f, g\in H.\]
\end{proof}
\begin{theorem}
If $T, K\in B(H)$ and $\{\Lambda_{\omega}\}_{\omega\in\Omega}$ is a c-K-g-frame for $H$ with respect to $\{H_{\omega}\}_{\omega\in\Omega}$, then $\{\Lambda_{\omega}T^{*}\}_{\omega\in\Omega}$ is a c-$TK$-g-frame for $H$ with respect to $\{H_{\omega}\}_{\omega\in\Omega}$.
\end{theorem}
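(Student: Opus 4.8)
The plan is to deduce the frame inequalities for $\{\Lambda_{\omega}T^{*}\}_{\omega\in\Omega}$ directly from those of $\{\Lambda_{\omega}\}_{\omega\in\Omega}$ by evaluating the latter at the vector $T^{*}f$ and exploiting the elementary operator identity $(TK)^{*}=K^{*}T^{*}$. First I would dispose of the measurability requirement: for a fixed $f\in H$ we have $T^{*}f\in H$, and since $\{\Lambda_{\omega}g\}_{\omega\in\Omega}$ is strongly measurable for every $g\in H$ by hypothesis, the choice $g=T^{*}f$ shows that $\{(\Lambda_{\omega}T^{*})f\}_{\omega\in\Omega}=\{\Lambda_{\omega}(T^{*}f)\}_{\omega\in\Omega}$ is strongly measurable. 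Moreover each $\Lambda_{\omega}T^{*}$ is a composition of bounded operators, hence $\Lambda_{\omega}T^{*}\in B(H,H_{\omega})$, so condition (i) of the definition of a c-$TK$-g-frame is met.

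Next, let $A,B$ be c-$K$-g-frame bounds for $\{\Lambda_{\omega}\}_{\omega\in\Omega}$, so that (\ref{e}) holds. Substituting $T^{*}f$ for $f$ in (\ref{e}) yields, for every $f\in H$,
\[
A\|K^{*}T^{*}f\|^{2}\le\int_{\Omega}\|\Lambda_{\omega}T^{*}f\|^{2}\,d\mu(\omega)\le B\|T^{*}f\|^{2}\le B\|T\|^{2}\|f\|^{2}.
\]
Since $K^{*}T^{*}=(TK)^{*}$, the left-hand inequality is precisely $A\|(TK)^{*}f\|^{2}\le\int_{\Omega}\|\Lambda_{\omega}T^{*}f\|^{2}\,d\mu(\omega)$, i.e.\ the lower c-$TK$-g-frame condition with constant $A$, while the right-hand side gives the Bessel bound $B\|T\|^{2}$. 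Therefore $\{\Lambda_{\omega}T^{*}\}_{\omega\in\Omega}$ is a c-$TK$-g-frame for $H$ with respect to $\{H_{\omega}\}_{\omega\in\Omega}$, with bounds $A$ and $B\|T\|^{2}$.

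I do not expect any genuine obstacle in this argument; it is a one-step substitution, and the only point that truly does the work is the adjoint identity $(TK)^{*}=K^{*}T^{*}$, which guarantees the substituted lower bound is measured against the correct range operator $TK$. The one cosmetic caveat worth noting is the degenerate case $T=0$: then $\|T\|=0$, so one should instead observe that $\int_{\Omega}\|\Lambda_{\omega}T^{*}f\|^{2}\,d\mu(\omega)=0$ and $\|(TK)^{*}f\|=0$ for all $f\in H$, and any pair of positive constants may be taken as bounds, keeping the statement consistent with the requirement $0<A\le B<\infty$.
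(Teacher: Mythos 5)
Your proposal is correct and follows essentially the same route as the paper: substitute $T^{*}f$ into the c-K-g-frame inequality for $\{\Lambda_{\omega}\}_{\omega\in\Omega}$ and use $(TK)^{*}=K^{*}T^{*}$ to obtain the bounds $A$ and $B\|T^{*}\|^{2}$. Your extra remarks on strong measurability and the degenerate case $T=0$ are fine additions but do not change the argument.
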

\begin{proof}
Since $\{\Lambda_{\omega}\}_{\omega\in\Omega}$ is a c-K-g-frame for $H$, by Definition \ref{e}, we have
\[A\|K^{*}f\|^{2}\leq\int_{\Omega}\|\Lambda_{\omega}f\|^{2}d\mu(\omega)\leq B\|f\|^{2},~f\in H.\]
Also,
\begin{eqnarray*}
A\|(TK)^{*}f \|^{2}&=&A\|K^{*}T^{*}f \|^{2}\leq\int_{\Omega}\|\Lambda_{\omega}T^{*}f\|^{2}d\mu(\omega)\\
&\leq&B \|T^{*}f\|^{2}\leq B \|T^{*}\|^{2}\|f\|^{2},\quad f\in H
\end{eqnarray*}
Therefore $\{\Lambda_{\omega}T^{*}\}_{\omega\in\Omega}$ is a c-$TK$-g-frame for $H$ with respect to $\{H_{\omega}\}_{\omega\in\Omega}$.
\end{proof}
\vspace{2mm}
\begin{corollary}
Let $K\in B(H)$. If $\{\Lambda_{\omega}\}_{\omega\in\Omega}$ is a c-K-g-frame for $H$ with respect to $\{H_{\omega}\}_{\omega\in\Omega}$, then $\{\Lambda_{\omega}(K^{*})^{N}\}_{\omega\in\Omega}$ is c-$K^{N+1}$-g-frame for $H$ with respect to $\{H_{\omega}\}_{\omega\in\Omega}$, where $N$ is a natural number.
\end{corollary}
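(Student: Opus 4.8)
The plan is to obtain this as a direct corollary of the preceding theorem by induction on the natural number $N$. Recall that the preceding theorem asserts: whenever $T,K\in B(H)$ and $\{\Lambda_{\omega}\}_{\omega\in\Omega}$ is a c-$K$-g-frame for $H$ with respect to $\{H_{\omega}\}_{\omega\in\Omega}$, the family $\{\Lambda_{\omega}T^{*}\}_{\omega\in\Omega}$ is a c-$TK$-g-frame. The whole argument is just iterating this statement with the choice $T=K$ and suitably updating the "$K$" slot.

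For the base case $N=1$ I would apply the preceding theorem with $T=K$: since $\{\Lambda_{\omega}\}_{\omega\in\Omega}$ is a c-$K$-g-frame, $\{\Lambda_{\omega}K^{*}\}_{\omega\in\Omega}$ is a c-$KK$-g-frame, i.e.\ a c-$K^{2}$-g-frame, which is exactly the claim for $N=1$ (the case $N=0$, if one wishes to include it, is the trivial tautology that $\{\Lambda_{\omega}\}_{\omega\in\Omega}$ is a c-$K$-g-frame). For the inductive step, assume $\{\Lambda_{\omega}(K^{*})^{N}\}_{\omega\in\Omega}$ is a c-$K^{N+1}$-g-frame. Now apply the preceding theorem with this family playing the role of $\{\Lambda_{\omega}\}_{\omega\in\Omega}$, with the bounded operator $K^{N+1}$ playing the role of "$K$", and again with $T=K$. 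The conclusion is that $\{(\Lambda_{\omega}(K^{*})^{N})K^{*}\}_{\omega\in\Omega}$ is a c-$(K K^{N+1})$-g-frame; since $(\Lambda_{\omega}(K^{*})^{N})K^{*}=\Lambda_{\omega}(K^{*})^{N+1}$ and $K K^{N+1}=K^{N+2}$, this is precisely the statement for $N+1$, and the induction closes.

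There is essentially no obstacle here: the only points to verify are the purely algebraic identities $(K^{*})^{N}K^{*}=(K^{*})^{N+1}$ and $KK^{N+1}=K^{N+2}$, together with the observation that the hypothesis "$T,K\in B(H)$" of the preceding theorem is met at every stage because products and powers of bounded operators are bounded. Alternatively, one could argue directly from the defining inequalities — writing $\|(K^{N+1})^{*}f\|=\|(K^{*})^{N}(K^{*}f)\|$ and feeding vectors of the form $(K^{*})^{N}f$ into the lower c-$K$-g-frame inequality for $\{\Lambda_{\omega}\}_{\omega\in\Omega}$, while bounding $\int_{\Omega}\|\Lambda_{\omega}(K^{*})^{N}f\|^{2}\,d\mu(\omega)\le B\|(K^{*})^{N}f\|^{2}\le B\|K^{*}\|^{2N}\|f\|^{2}$ on the Bessel side — but routing everything through the previous theorem is cleaner and I would present it that way.
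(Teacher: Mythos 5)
Your argument is correct and matches how the paper intends the corollary to be obtained, namely as a direct consequence of the preceding theorem (the paper supplies no separate proof). Note only that the induction is unnecessary: a single application of that theorem with $T=K^{N}$, using $(K^{*})^{N}=(K^{N})^{*}$ and $K^{N}K=K^{N+1}$, yields the statement at once.
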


\section{Perturbation of c-K-g-frames}
 Perturbation of discrete frames and frames associated with measurable spaces (c-frame) have been discussed  in \cite{4} and \cite{9}, respectively, and perturbations of g-frames, c-g-frames and their dual have been discussed in \cite{11}, \cite{18} and \cite{1}. Stability and perturbation of K-g-frames have been investigated in \cite{3}, \cite{19,20} and \cite{10}. In this section, we introduce perturbation of c-K-g-frames.

\begin{theorem}
Let  $\{\Lambda_{\omega}\}_{\omega\in \Omega}$ be a c-K-g-frame for $H$ with bound $A, B$ and $\{\Gamma_{\omega}\}_{\omega\in \Omega}$ be a family of operators such that  $\{\Gamma_{\omega}\}_{\omega\in \Omega}$  is strongly measurable for each $f\in H$. If there exist constants $\lambda_1,\lambda_2,\gamma\geq 0$ such that
$\max\{\lambda_2,\frac{\gamma}{A}+\lambda_1\}<1$ and for each $f, g \in H$,
\begin{align}\label{aa}
\int _{\Omega}\big|\langle (\Lambda_{\omega}^{*}\Lambda_{\omega}-\Gamma_{\omega}^{*}\Gamma_{\omega})f, g\rangle \big| d\mu(\omega)\hspace{3.5cm}
\end{align}
\begin{eqnarray*}
&\leq&\lambda_{1}\int_{\Omega} \big|\langle \Lambda_{\omega}^{*}\Lambda_{\omega}f, g\rangle \big| d\mu(\omega)+\lambda_{2}\int_{\Omega} \big|\langle \Gamma_{\omega}^{*}\Gamma_{\omega}f, g\rangle \big| d\mu(\omega) +\gamma \|K^*f\|^2,
\end{eqnarray*}
\\
\noindent
then $\{\Gamma_{\omega}\}_{\omega\in \Omega}$ is a c-K-g-frame for $H$ with respect to $\{H_{\omega}\}_{\omega\in \Omega}$ with bounds
\[\dfrac{(1-\lambda_{1})A-\gamma}{1+\lambda_{2}}~~\text{and}~~\dfrac{(1+\lambda_{1})B+\gamma\|K\|^2}{1-\lambda_{2}}.\]\\
\end{theorem}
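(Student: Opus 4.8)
The plan is to derive both frame inequalities for $\{\Gamma_{\omega}\}_{\omega\in\Omega}$ from the perturbation inequality (\ref{aa}) by specializing it to $g=f$ and then playing the triangle inequality $\bigl|\int_{\Omega}(\cdot)\,d\mu\bigr|\le\int_{\Omega}|\cdot|\,d\mu$ off against itself in the two possible directions. The reason for taking $g=f$ is that $\langle\Lambda_{\omega}^{*}\Lambda_{\omega}f,f\rangle=\|\Lambda_{\omega}f\|^{2}\ge 0$ and $\langle\Gamma_{\omega}^{*}\Gamma_{\omega}f,f\rangle=\|\Gamma_{\omega}f\|^{2}\ge 0$, so the absolute values on the right-hand side of (\ref{aa}) become superfluous and (\ref{aa}) reduces to
\[
\int_{\Omega}\bigl|\,\|\Lambda_{\omega}f\|^{2}-\|\Gamma_{\omega}f\|^{2}\,\bigr|\,d\mu(\omega)\le\lambda_{1}\int_{\Omega}\|\Lambda_{\omega}f\|^{2}d\mu(\omega)+\lambda_{2}\int_{\Omega}\|\Gamma_{\omega}f\|^{2}d\mu(\omega)+\gamma\|K^{*}f\|^{2}.
\]
Before using this I would first note that $\{\Gamma_{\omega}f\}_{\omega\in\Omega}$ is strongly measurable by hypothesis and record that $\int_{\Omega}\|\Gamma_{\omega}f\|^{2}d\mu(\omega)<\infty$; the latter comes out of the upper-bound estimate below and is in any case implicit in the requirement that (\ref{aa}) hold with a finite right-hand side.

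For the upper bound, the elementary estimate $\int_{\Omega}\|\Gamma_{\omega}f\|^{2}d\mu-\int_{\Omega}\|\Lambda_{\omega}f\|^{2}d\mu\le\int_{\Omega}\bigl|\,\|\Lambda_{\omega}f\|^{2}-\|\Gamma_{\omega}f\|^{2}\,\bigr|\,d\mu$ together with the displayed inequality gives
\[
(1-\lambda_{2})\int_{\Omega}\|\Gamma_{\omega}f\|^{2}d\mu(\omega)\le(1+\lambda_{1})\int_{\Omega}\|\Lambda_{\omega}f\|^{2}d\mu(\omega)+\gamma\|K^{*}f\|^{2}\le\bigl((1+\lambda_{1})B+\gamma\|K\|^{2}\bigr)\|f\|^{2},
\]
where I used the c-K-g-Bessel bound of $\{\Lambda_{\omega}\}_{\omega\in\Omega}$ and $\|K^{*}f\|\le\|K\|\,\|f\|$. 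Since $\lambda_{2}<1$, dividing by $1-\lambda_{2}$ yields the asserted upper bound $\frac{(1+\lambda_{1})B+\gamma\|K\|^{2}}{1-\lambda_{2}}$.

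For the lower bound I would use the triangle inequality in the other direction, $\int_{\Omega}\|\Lambda_{\omega}f\|^{2}d\mu-\int_{\Omega}\|\Gamma_{\omega}f\|^{2}d\mu\le\int_{\Omega}\bigl|\,\|\Lambda_{\omega}f\|^{2}-\|\Gamma_{\omega}f\|^{2}\,\bigr|\,d\mu$, which with the displayed inequality gives
\[
(1-\lambda_{1})\int_{\Omega}\|\Lambda_{\omega}f\|^{2}d\mu(\omega)\le(1+\lambda_{2})\int_{\Omega}\|\Gamma_{\omega}f\|^{2}d\mu(\omega)+\gamma\|K^{*}f\|^{2}.
\]
Now I would invoke the lower c-K-g-frame bound $A\|K^{*}f\|^{2}\le\int_{\Omega}\|\Lambda_{\omega}f\|^{2}d\mu(\omega)$ to bound the left-hand integral from below, obtaining $\bigl((1-\lambda_{1})A-\gamma\bigr)\|K^{*}f\|^{2}\le(1+\lambda_{2})\int_{\Omega}\|\Gamma_{\omega}f\|^{2}d\mu(\omega)$. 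The hypothesis $\frac{\gamma}{A}+\lambda_{1}<1$ is precisely what guarantees $(1-\lambda_{1})A-\gamma>0$, so dividing by $1+\lambda_{2}$ delivers the lower bound $\frac{(1-\lambda_{1})A-\gamma}{1+\lambda_{2}}$ and completes the argument.

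The computation itself is routine bookkeeping; the only delicate point is the a priori finiteness of $\int_{\Omega}\|\Gamma_{\omega}f\|^{2}d\mu(\omega)$, which is needed so that the triangle-inequality manipulations involving that integral are genuine inequalities rather than vacuous $\infty\le\infty$ statements. I expect to dispatch this at the outset — either by treating it as implicit in the hypothesis that (\ref{aa}) is a meaningful (finite) inequality, or, when $\mu$ may be taken $\sigma$-finite, by first running the upper-bound computation on truncations of $\Omega$ to sets of finite measure on which all integrands are integrable and then passing to the limit by monotone convergence.
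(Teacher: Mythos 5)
Your proof is correct and takes essentially the same route as the paper: the paper likewise plays the triangle inequality in both directions against the perturbation hypothesis and then invokes the c-K-g-frame bounds of $\{\Lambda_{\omega}\}_{\omega\in\Omega}$, merely carrying a general $g$ through the upper-bound computation before in effect setting $g=f$, which is equivalent to your immediate specialization. Your cautionary remark about the a priori finiteness of $\int_{\Omega}\|\Gamma_{\omega}f\|^{2}\,d\mu(\omega)$ (needed to subtract $\lambda_{2}\int_{\Omega}\|\Gamma_{\omega}f\|^{2}\,d\mu(\omega)$ from both sides) concerns a point the paper passes over silently as well, so it is an added care rather than a divergence.
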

\begin{proof}
For each $f, g\in H,$
\begin{eqnarray*}
\Big|\int_{\Omega}\langle \Gamma_{\omega}^{*}\Gamma_{\omega}f, g\rangle d\mu(\omega)\Big|&\leq
& \int_{\Omega}\big|\langle (\Lambda_{\omega}^{*}\Lambda_{\omega}-\Gamma_{\omega}^{*}\Gamma_{\omega})f, g\rangle \big| d\mu(\omega)\\
&+&\int_{\Omega}\big|\langle (\Lambda_{\omega}^{*}\Lambda_{\omega}f, g\rangle \big| d\mu(\omega)\\
&\leq&(\lambda_{1}+1)\int_{\Omega} \big|\langle \Lambda_{\omega}^{*}\Lambda
_{\omega}f, g\rangle \big| d\mu(\omega)\\
&+&\lambda_{2}\int_{\Omega} \big| \langle \Gamma_{\omega}^{*}\Gamma_{\omega}f, g\rangle \big| d\mu(\omega)+
\gamma \|K^*f\|^2.
\end{eqnarray*}
So for each $f, g\in H$,
\begin{eqnarray*}
\int_{\Omega} \big|\langle \Gamma_{\omega}^{*}\Gamma_{\omega}f, g\rangle \big|d\mu(\omega)
&\leq&\dfrac{1+\lambda_{1}}{1-\lambda_{2}}\int_{\Omega}\big|\langle \Lambda_{\omega}^{*}\Lambda_{\omega}f, g\rangle \big| d\mu(\omega)
+\dfrac{\gamma}{1-\lambda_{2}}\|K^*f\|^2.
\end{eqnarray*}
Hence for each  $f\in H$,
\begin{eqnarray*}
\int_{\Omega}\|\Gamma_{\omega}f\|^{2}d\mu(\omega)&\leq &\dfrac{1+\lambda_{1}}{1-\lambda_{2}}B\|f\|^{2}+\dfrac{\gamma}{1-\lambda_{2}}\|K\|^2 \|f\|^{2}\\
&\leq &\dfrac{(1+\lambda_{1})B+\gamma\|K\|^2}{1-\lambda_{2}}\|f\|^{2}.
\end{eqnarray*}
Therefore $\{\Gamma_{\omega}\}_{\omega\in \Omega}$ is a c-g-Bessel family for $H$.

Now we show that $\{\Gamma_{\omega}\}_{\omega\in\Omega}$ has the lower  c-K-g-frame  condition. For each $f\in H$,
\begin{align*}
\int_{\Omega}\|\Gamma_{\omega}f\|^{2}d\mu(\omega)
&=\int_{\Omega} \big|\langle \Gamma_{\omega}^{*}\Gamma_{\omega}f, f\rangle \big| d\mu(\omega)
\\&=\int_{\Omega} \big|\langle \Gamma_{\omega}^{*}\Gamma_{\omega}f-\Lambda_{\omega}^{*}\Lambda_{\omega}f+\Lambda_{\omega}^{*}\Lambda_{\omega}f, f\rangle \big| d\mu(\omega)
\\ &\geq \int_{\Omega} \big|\langle \Lambda_{\omega}^{*}\Lambda_{\omega}f, f\rangle \big| d\mu(\omega)
-\int_{\Omega} \big|\langle \Gamma_{\omega}^{*}\Gamma_{\omega}f-\Lambda_{\omega}^{*}\Lambda_{\omega}f, f\rangle \big| d\mu(\omega)
\\ & \geq  \int_{\Omega}\|\Lambda_{\omega}f\|^{2}d\mu(\omega)-\lambda_1  \int_{\Omega}\|\Lambda_{\omega}f\|^{2}d\mu(\omega)
\\& ~~~ -\lambda_2 \int_{\Omega}\|\Gamma_{\omega}f\|^{2}d\mu(\omega)-\gamma \|K^*f\|^2.
\end{align*}
Then for each $f\in H$,
\begin{align*}
\int_{\Omega}\|\Gamma_{\omega}f\|^{2}d\mu(\omega)
&\geq\frac{1}{1+\lambda_2}\Big[(1-\lambda_1)\int_{\Omega}\|\Lambda_{\omega}f\|^{2}d\mu(\omega)-\gamma \|K^*f\|^2\Big]
\\&\geq\frac{1}{1+\lambda_2}\Big[(1-\lambda_1)A\|K^*f\|^2-\gamma \|K^*f\|^2\Big]
\\&\geq\frac{(1-\lambda_1)A-\gamma}{1+\lambda_2}\|K^*f\|^2.
\end{align*}
\end{proof}




\end{document}